\documentclass[11pt,reqno]{amsart}
\usepackage{amssymb, amsmath, amsthm,amsrefs,marvosym,wasysym}
\usepackage{latexsym}
\usepackage{color}
\usepackage{exscale}
\usepackage{fullpage}
\usepackage{rotating}
\usepackage{bm, bbm, mathrsfs}
\usepackage{graphics, graphicx}
\newtheorem{theorem}{Theorem}[section]
\newtheorem{lemma}[theorem]{Lemma}
\newtheorem{proposition}[theorem]{Proposition}

\newtheorem{definition}{Definition}[section]

\newtheorem{example}[theorem]{Example}
\newtheorem{observation}{Observation}
\newtheorem{remark}{Remark}[section]

\def\eq#1{(\ref{#1})}
\def\nn{\nonumber}
\def\({\left(\begin{array}{cccccc}}
\def\){\end{array}\right)}

\def\eq#1{(\ref{#1})}
\def\nn{\nonumber}

\def\({\left(\begin{array}{cccccc}}
\def\){\end{array}\right)}

\def\bes{\begin{eqnarray}}
\def\ees{\end{eqnarray}}

\newcommand{\del}{\partial}

\newcommand{\beq}{\begin{equation}}
\newcommand{\eeq}{\end{equation}}
\newcommand{\bea}{\begin{eqnarray}}
\newcommand{\eea}{\end{eqnarray}}
\newcommand{\beann}{\begin{eqnarray*}}
\newcommand{\eeann}{\end{eqnarray*}}

\newcommand{\eps}{\ensuremath{\varepsilon}}

\newcommand{\RR}{\mathbb{R}}

\newcommand{\vph}{{\varphi}}

\DeclareMathOperator{\sgn}{sgn}

\newcommand{\bu}{{\bf u}}
\newcommand{\bv}{{\bf v}}
\newcommand{\bF}{{\bf F}}
\newcommand{\bx}{{\bf x}}
\newcommand{\bX}{{\bf X}}
\newcommand{\by}{{\bf y}}
\newcommand{\bY}{{\bf Y}}
\newcommand{\ba}{{\bf a}}
\newcommand{\bb}{{\bf b}}
\newcommand{\bz}{{\bf z}}
\newcommand{\bw}{{\bf w}}

\newcommand{\nquad}{\negthickspace\negthickspace
\negthickspace\negthickspace}
\newcommand{\nqquad}{\nquad\nquad}

\DeclareMathOperator{\grad}{grad}
\DeclareMathOperator{\dv}{div}

\numberwithin{equation}{section}
 
\begin{document}

\title{Cumulative Euler flows}

\begin{abstract}
	We consider the compressible Euler system for ideal gas flow in the absence of 
	any forces except the internal thermodynamic pressure. In this setting, and in dimensions
	higher 1, it is known that wave-focusing can drive Euler solutions to amplitude blowup 
	in finite time from bounded initial data. In the known cases (self-similar, radial flows
	\cites{gud,hun_60,jt3,laz,mrrs1,jls})
	the primary flow variables are standard functions at time of blowup.
	It is natural to ask if the Euler system admits even more singular behavior, and
	specifically whether accumulation of mass, i.e., the appearance of a Dirac delta 
	in the density field, is possible.
	
	We consider the class of radial affine motions \cites{mcvittie,sed, kell,sid_2014}
	which are conveniently obtained via a Lagrangian formulation.
	This class does include examples of cumulative behavior, and we observe that 
	there are two distinct mechanisms for accumulation, due to inertial effects or adverse 
	pressure gradients, respectively. However, we show that all affine cumulative solutions 
	necessarily exhibit unphysical behavior due to initially unbounded velocity and/or 
	acceleration in the far-field. 
	We also analyze the behavior of characteristics in 
	cumulative flows and consider concrete examples, including a class of 1-dimensional, 
	non-affine flows. Finally, we discuss the possibility of modifying the 
	known examples to obtain physically acceptable gas flows displaying 
	accumulation.

\bigskip
	
	\noindent
%{\bf Key words.} Compressible fluid flow, multi-d Euler system, similarity solutions, radial symmetry, unbounded solutions, contact discontinuity

\noindent
%{\bf AMS subject classifications.} 35L45, 35L67, 76N10, 35Q31
\end{abstract}

\author{Helge Kristian Jenssen }\address{H.~K.~Jenssen, Department of
Mathematics, Penn State University,
University Park, State College, PA 16802, USA ({\tt
jenssen@math.psu.edu}).}

\date{\today}
\maketitle

\tableofcontents

%%%%%%%%%%%%%%%%%%%%%%%%%%%%%%%%%%%%%%%%%%%%%%%%
%%%%%%%%%%%%%%%%%%%%%%%%%%%%%%%%%%%%%%%%%%%%%%%%
\section{Introduction}\label{intro} 
%%%%%%%%%%%%%%%%%%%%%%%%%%%%%%%%%%%%%%%%%%%%
%%%%%%%%%%%%%%%%%%%%%%%%%%%%%%%%%%%%%%%%%%%%
We revisit a class of essentially explicit solutions to the radially symmetric Euler system 
describing compressible gas flow in the absence of external forces, self-gravitation, and
electrodynamic effects. This well-studied
class has been considered by a number of authors, starting with independent 
investigations in the early 1950s by McVittie \cite{mcvittie}, Sedov \cite{sed}, and Keller \cite{kell}.
The solutions in question may be characterized by positing either that the velocity field
is linear in the spatial (Eulerian) coordinate, or by insisting that the motion of fluid elements
factors when viewed as a function of time and a Lagrangian variable (see Section \ref{septd_aff},
Remarks \ref{mcvittie_approach} and \ref{keller_approach}).
The aforementioned authors analyzed the resulting flows in some detail. In 
particular, Sedov \cite{sed} and Keller \cite{kell} exhibited concrete cases 
of {\em accumulating} radial flows, i.e., flows in which 
all fluid particles reach a common point (the center of motion) at a common critical 
time $t_c$. (This phenomenon is also referred to as {\em collapse} of the fluid.) 
Cumulative solutions have been considered in models for inertial fusion, 
see \cite{am} for a detailed discussion.

Accumulation represents a highly singular phenomenon in fluid flow as it signals 
the appearance of a Dirac delta in the density field. This is therefore a more severe 
singularity than what occurs in the, by now, well-studied case of amplitude blowup
in imploding self-similar Euler flows \cites{gud,hun_60,jt3,laz,mrrs1,jls} (see Section
\ref{affine_anal} for a brief discussion). For the latter class of solutions no
accumulation occurs, notwithstanding the fact that the density field can suffer 
blowup at the center of motion.\footnote{In the work \cite{sid_2024}  Sideris 
demonstrates the possibility of physically acceptable cumulative solutions for 
motions of a ball of certain hyperelastic materials surrounded by vacuum. 
Another instance of accumulation occurs in self-gravitating gas flow
(gravitational collapse); see \cites{dxy,mak,fu_lin,dlyy,ghj}.}

We are therefore motivated to revisit the class of solutions indicated above. 
Following Keller \cite{kell} and Sideris \cites{sid_2014,sid_2017,sid_2024}, we find it convenient 
to obtain them in a Lagrangian framework. As a minor point about the general 
class of these ``linear velocity'' or ``separated'' solutions, we observe that they can all
be obtained as {\em affine} flows, i.e., flows whose  motions 
are linear in the Lagrangian (``particle'') coordinate; see Proposition \ref{septd_vs_affine}.

We then analyze cumulative affine flows. In this case accumulation is 
necessary total (i.e., {\em all} fluid particles reach the center of motion at the same time; 
cf.\ Section \ref{affine_anal}), and we find that there are two distinct 
types: Accumulation is caused either entirely by inertial effects in a fluid with unbounded 
velocities in the far-field and no pressure gradients, or it is due to an adverse, positive pressure gradient
(i.e., the pressure increases with the distance form the center of motion). Concrete 
examples are provided to illustrate flows of both types. 

On the other hand, we also observe that any (non-trivial) affine flow, cumulative or not, suffers 
from the following un-physical feature: The initial velocity and/or the initial 
acceleration field is necessarily unbounded in the far-field.\footnote{This 
applies in particular to certain 1-dimensional non-accumulating 
flows with vanishing initial velocity 
in which the conserved quantities are arbitrarily small in 
$BV(\RR)$ and $L^1(\RR)$. These flows are globally defined in time and 
are covered by Glimm's theorem \cite{gl}. The fact that the 
velocity (a non-conserved quantity) suffers instantaneous blowup illustrates 
a subtle point about Glimm's theorem; for details see Example \ref{instant_blowup} and Remark 
\ref{instant_blowup_rmk}.}
The upshot is that all cases of accumulation in radial affine flows is 
readily accounted for and not all that surprising.

This leaves open the question of existence of ``physically acceptable'' cumulative flows
of ideal gases.
In particular, it is of interest to consider the possibility of modifying the far-field initial data 
in the known radial accumulating flows in such a manner that accumulation 
still occurs, while avoiding infinite velocities or accelerations in the far-field.

For this it is relevant to consider the characteristics (and especially the 
1-characteristics) in radial affine flows. We therefore consider these for both types of
cumulative behavior. Since 1-characteristics, by definition, approach the center 
of motion faster than particle trajectories, it is immediate that 1-characteristics in 
cumulative flows must reach the center of motion $r=0$ no later than the critical time $t_c$.
(In fact, concrete examples show that there are at least two possibilities: Either 
each 1-characteristic reaches $r=0$ strictly before $t_c$, or all 1-characteristics, 
as well as all particle trajectories, accumulate at $r=0$ at time $t_c$; see Examples
\ref{ex_lam_0} and \ref{ex_lam_neg}). In our view, this circumstance makes it probable that any modification of the 
initial data in accumulating affine flows will influence, and ``pollute,'' the particle trajectories
near $r=0$ that are supposed to produce accumulation, with the outcome that 
cumulative behavior is avoided. For more details see Section \ref{disc}.
Resolving the issue of accumulation from ``physically acceptable'' data, including the 
possibility of other types of cumulative behavior, remains an open problem 
in gas dynamics. (In Example \ref{ex_non_affine} we give a class of 1-dimensional, non-affine, 
accumulating flows; again, however, they all have an initially unbounded velocity field.)

The rest of the article is organized as follows. In the next section we record the 
non-isentropic Euler system and specialize to radial flows of an ideal, polytropic 
gas. Sections \ref{lagr_formln} and \ref{eul_flws} briefly summarizes the Lagrangian 
formulation and derives the Lagrangian equation of motion (EOM) for prescribed 
pressure and density reference profiles. The EOM is specialized to radial flows in 
Section \ref{radial_motions} and to radial separated flows in Section \ref{septd_aff}.
We show that, due to the freedom of choosing reference profiles for the 
thermodynamic fields, the class of separated flows is covered by the seemingly more
special class of affine flows (Proposition \ref{septd_vs_affine}). 
In Section \ref{affine_anal} we focus on the latter class of radial flows,
record the aforementioned far-field behaviors, and briefly discuss the 
general issue of accumulation in Euler flows. Sections \ref{aff_lam_0} and Section
\ref{aff_lam_not_0} provide details on cumulative radial affine flows, 
including expressions for time of accumulation, directions of flow,
properties of the far-field, and behavior of 1-characteristics. 
Concrete examples illustrate the findings, including the possibility of 
instantaneous blowup in the velocity for small $BV$-solutions in dimension 1
(cf. Remark \ref{instant_blowup_rmk}). In Section \ref{non_affine} we 
demonstrate, via an explicit class of 1-dimensional solutions, 
that accumulation can occur also in non-affine flows.
Finally, Section \ref{disc} summarizes our 
findings and discusses the prospect of building cumulative solutions with 
a physically acceptable far-field behavior. In particular, we comment on 
the effects of different (continuous or discontinuous) modifications of the initial 
data in cumulative radial affine flows.

%%%%%%%%%%%%%%%%%%%%%%%%%%%%%%%%%%%%%%%%%%%%%%%%
%%%%%%%%%%%%%%%%%%%%%%%%%%%%%%%%%%%%%%%%%%%%%%%%
\section{The Euler equations for compressible gas dynamics}\label{compr_euler} 
%%%%%%%%%%%%%%%%%%%%%%%%%%%%%%%%%%%%%%%%%%%%
%%%%%%%%%%%%%%%%%%%%%%%%%%%%%%%%%%%%%%%%%%%%
The non-isentropic compressible Euler system expresses conservation of 
mass, linear momentum, and energy in the absence of viscosity and heat conduction: 
\begin{align}
	\rho_t+\dv_{\bx}(\rho \bu)&=0 \label{mass_cons}\\
	(\rho{\bf  u})_t+\dv_{\bx}[\rho {\bu}\otimes{\bu}]+\grad_{\bf  x} p&={\bf 0} \label{mom_cons}\\
	(\rho E)_t+\dv_{\bx}[(\rho E+p){\bu}]&=0.\label{energy_cons}
\end{align}
The independent variables are time $t\in\RR$ and position ${\bf  x}\in\RR^n$, and the 
dependent variables are density $\rho$, fluid velocity ${\bf  u}$, 
and specific internal energy $\eps$; the specific total energy is $E=\eps+\textstyle\frac{1}{2}|{\bf  u}|^2$. 
Recalling the Gibbs relation
\[d\eps=\theta dS+\textstyle\frac{p}{\rho^2}d\rho,\]
where $\theta$ is the absolute temperature and $S$ is the specific
entropy, we have that $\eps=\eps(\rho,S)$ satisfies
\beq\label{eos}
	\eps_S=\theta\qquad\text{and}\qquad \eps_\rho=\textstyle\frac{p}{\rho^2}.
\eeq
Introducing the convective derivative 
\[\textstyle\frac{D}{Dt}\equiv \del_t+(\bu\cdot\nabla_\bx),\]
and performing standard simplifications (assuming $C^1$-smoothness and $\rho>0$), 
the Euler system takes the form
\begin{align}
	\textstyle\frac{D\rho}{Dt}+\rho\dv_{\bf x}\bf u&=0 \label{mass}\\
	\textstyle\frac{D\bu}{Dt}+\frac{1}{\rho}\grad_{\bx} p&={\bf 0}\qquad\label{mom}\\
	\textstyle\frac{D\eps}{Dt}+\frac{p}{\rho}\dv_{\bx}\bu&=0.\label{int_energy}
\end{align}
Using \eq{eos} and \eq{mass}, and assuming also $\theta>0$, \eq{int_energy} reduces to 
\beq\label{S_cons}
	\textstyle\frac{DS}{Dt}=0.
\eeq
Alternatively, regarding the pressure as a function of $\rho$ and $\eps$, 
we can replace \eq{int_energy} by the equation for the pressure:
\beq\label{p_eqn}
	\textstyle\frac{Dp}{Dt}+(\rho p_\rho+\textstyle\frac{p}{\rho}p_\eps)\dv_{\bx}\bu=0.
\eeq
In what follows we restrict attention to ideal gases, in which case the 
pressure $p$ given by
\beq\label{pressure1}
	p(\rho,\eps)=(\gamma-1)\rho \eps\qquad\qquad\text{($\gamma>1$ constant).}
\eeq
We also assume that the gas is polytropic: 
\beq\label{polytr}
	\eps=c_v\theta \qquad\qquad\text{($c_v>0$ constant).}
\eeq
The equation for $\eps$ then takes the form
\beq\label{int_energy_ideal}
	\textstyle\frac{D\eps}{Dt}+(\gamma-1)\eps\dv_{\bx}\bu=0,
\eeq
while the pressure equation \eq{p_eqn} takes the form
\beq\label{p_eqn_ideal}
	\textstyle\frac{Dp}{Dt}+\gamma p\dv_{\bx}\bu=0.
\eeq
In ideal polytropic gases the
internal energy is proportional to the temperature $\theta$ of the gas: $e\propto\theta$.
For later reference we record that the local sound speed $c$ is given by
\beq\label{sound_speed}
	c=\sqrt{\textstyle\frac{\gamma p}{\rho}}=\sqrt{\gamma(\gamma-1)\eps}\propto\sqrt\theta.
\eeq
%We refer to $\rho$, $\bf u$, $p$, $\eps$, $c$, $\theta$ as primary (undifferentiated) 
%flow variables.

%%%%%%%%%%%%%%%%%%%%%%%%%%%%%%%%%%%%
\subsection{Radial Euler flows in Eulerian frame}\label{rad_flows}
%%%%%%%%%%%%%%%%%%%%%%%%%%%%%%%%%%%%
In {\em radial flows} the flow variables depend on
position only through $r=|{\bf x}|$, and the 
velocity field is purely radial, viz.\ ${\bf u}(t,\bx)=u(t,r)\frac{\bf x}{r}$. 
With these assumptions, and within $C^1$-regions of the flow, 
\eq{mass} and \eq{mom} become
\begin{align}
	\rho_t+u\rho_r+\rho(u_r+\textstyle\frac{(n-1)u}{r}) &= 0\label{mass_rad}\\
	u_t+ uu_r +\textstyle\frac{1}{\rho}p_r&= 0,\label{mom_rad}
\end{align}
respectively, where $\rho=\rho(t,r)$, $u=u(t,r)$, and $p=p(t,r)$. 
The forms \eq{S_cons}, \eq{int_energy_ideal}, \eq{p_eqn_ideal} of the 
energy equation take the forms
\begin{align}
	S_t+uS_r&=0\label{S_rad}\\
	\eps_t+u\eps_r+(\gamma-1)\eps(u_r+\textstyle\frac{(n-1)u}{r})&=0
	\label{int_energy_ideal_rad}\\
	p_t+up_r+\gamma p(u_r+\textstyle\frac{(n-1)u}{r})&=0,\label{p_eqn_ideal_rad}
\end{align}
respectively.
%Alternatively, in terms of the sound speed $c:=\sqrt{\frac{\gamma p}{\rho}}$, the energy equation 
%takes the form 
%\beq\label{c_rad}
%	c_t+uc_r+{\textstyle\frac{\gamma-1}{2}}c(u_r+\textstyle\frac{(n-1)u}{r})=0.
%\eeq
%%%%%%%%%%%%%%%%%%%%%%%%%%%%%%%%%%%%
\begin{definition}
	Any solution to \eq{mass_rad}-\eq{mom_rad} and one of \eq{S_rad}-\eq{p_eqn_ideal_rad}
	is referred to as a radial Euler flow.
\end{definition}
%%%%%%%%%%%%%%%%%%%%%%%%%%%%%%%%%%%%

%%%%%%%%%%%%%%%%%%%%%%%%%%%%%%%%%%%%
%%%%%%%%%%%%%%%%%%%%%%%%%%%%%%%%%%%%
\section{Lagrangian formulation}\label{lagr_formln}
%%%%%%%%%%%%%%%%%%%%%%%%%%%%%%%%%%%%
%%%%%%%%%%%%%%%%%%%%%%%%%%%%%%%%%%%%
In this section we briefly recall some standard definitions, introduce notation,
and formulate a lemma for later use. In what follows, gradients of scalar 
quantities are considered as row vectors; all other vectors are 
considered as column vectors.

In the Lagrangian formulation one considers the flow variables as they evolve 
for fixed ``particles.'' We fix a reference configuration labeled by $\by\in\RR^n$,
$\by$ denoting the Lagrangian coordinate.
The `$\by$' here may denote the initial position in physical space of ``particle $\by$,'' 
but that is not necessarily the case. (E.g., for radial flows one may let 
$\by$, or rather $|\by|$, be a ``Lagrangian mass coordinate,'' cf.\ Remark \ref{keller_approach}.) A generic point 
in physical space is labeled $\bx\in\RR^n$ (Eulerian frame); functions of $(t,\bx)$ (or of $(t,\by)$)
are referred to as Eulerian (Lagrangian, respectively) functions. 

A {\em motion} is a map $\bx=\bX(t,\by)$ which gives the position in physical space 
at time $t$ of the particle indexed by $\by$. At this stage it is not 
assumed that the motion $\bX(t,\by)$ gives a physical flow described by 
the Euler (or any other) system of field equations. 

In what follows, the motion $\bX(t,\by)$ is assumed to be $C^1$  and globally 
$C^1$-invertible at each time $t$. The inverse, position-to-label map is denoted 
$\bY(t,\bx)$, i.e.,
\beq\label{inverse}
	\bY(t,\bX(t,\by))=\by.
\eeq
Differentiating with respect to $\by$ gives
\beq\label{jacs}
	\big(D_\bx\bY(t,\bx)\big)\big|_{\bx=\bX(t,\by)}=(D_\by\bX(t,\by))^{-1},
\eeq
where $D_\bz$ denotes the Jacobian with respect to the variable $\bz$.
The motion $\bX(t,\by)$ defines a velocity field $\bu(t,\bx)$ in physical space according to
\beq\label{u1}
	\bu(t,\bX(t,\by)):=\dot\bX(t,\by),
\eeq
or, equivalently,
\beq\label{u2}
	\bu(t,\bx)=\dot\bX(t,\by)\big|_{\by=\bY(t,\bx)},
\eeq
where we use a dot for time differentiation of Lagrangian functions.
The convective derivative associated with the motion $\bX(t,\by)$ of an
Eulerian scalar field $\vph(t,\bx)$ is given by
\[\frac{D\vph}{Dt}(t,\bx):=\vph_t(t,\bx)+(\bu(t,\bx)\cdot\nabla)\vph(t,\bx)
\equiv\vph_t(t,\bx)+\nabla_\bx\vph(t,\bx)\cdot \bu(t,\bx),\]
where $\nabla_\bx$ is  the gradient with respect to $\bx$ and $t$-subscript 
denotes time differentiation of Eulerian functions.
Similarly, the convective derivative of an Eulerian vector field
$\bv(t,\bx)\in\RR^{n\times1}$ is given by
\[\textstyle\frac{D\bv}{Dt}(t,\bx):=\bv_t(t,\bx)+(\bu(t,\bx)\cdot\nabla)\bv(t,\bx)
\equiv\bv_t(t,\bx)+D_\bx\bv(t,\bx)\bu(t,\bx).\]
Convective differentiation corresponds to time 
differentiation along particle trajectories, i.e., with
\beq\label{lagr_vf}
	\bw(t,\by):=\bv(t,\bX(t,\by)),
\eeq
we have
\beq\label{conv_lagr}
	\dot\bw(t,\by)=\textstyle\frac{D\bv}{Dt}\big|_{(t,\bX(t,\by))}.
\eeq
Finally, we define the Jacobian determinant
\beq\label{J}
	J(t,\by):=\det D_\by\bX(t,\by),
\eeq
and record the standard result \cite{serrin} that
\beq\label{J_deriv}
	\dot J(t,\by)=J(t,\by)(\dv_\bx\bu)\big|_{(t,\bX(t,\by))}.
\eeq
The following lemma shows how $J(t,\by)$ can be used to modulate a Lagrangian scalar field 
to obtain solutions to certain transport equations in the Eulerian frame; in the next section 
this is applied to the thermodynamic fields in Euler flow.
%%%%%%%%%%%%%%%%%%%%%%%%%%%%%%%%%%%%%%%%%
\begin{lemma}\label{scalar_pde}
	Let $\bX(t,\by)$ be a given (sufficiently smooth) motion with associated velocity field 
	$\bu(t,\bx)$ defined by \eq{u1}.
	Then, given the Lagrangian scalar field $\bar\vph(\by)$, the Eulerian scalar field 
	$\vph(t,\bx)$ defined by
	\beq\label{phi}
		\vph(t,\bX(t,\by)):=J(t,\by)^{-\alpha}\bar\vph(\by)\qquad\text{($\alpha$ constant)}
	\eeq
	satisfies the PDE
	\beq\label{pde}
		\textstyle\frac{D\vph}{Dt}(t,\bx)+\alpha(\vph\dv \bu)(t,\bx)=0.
	\eeq
\end{lemma}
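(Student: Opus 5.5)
The plan is to work entirely along particle trajectories, using \eq{conv_lagr} to convert the convective derivative into a time derivative of a Lagrangian function, and then to use \eq{J_deriv} to differentiate the factor $J^{-\alpha}$.

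First, fix $\by$ and set $w(t,\by):=\vph(t,\bX(t,\by))$. This is the scalar analogue of \eq{lagr_vf}. By the definition \eq{phi}, $w(t,\by)=J(t,\by)^{-\alpha}\bar\vph(\by)$. The scalar version of \eq{conv_lagr} (which is just the chain rule together with \eq{u1}) gives
\[
\dot w(t,\by)=\textstyle\frac{D\vph}{Dt}\big|_{(t,\bX(t,\by))}.
\]

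Second, compute $\dot w$ directly. Since $\bar\vph$ is independent of $t$,
\[
\dot w=-\alpha J^{-\alpha-1}\dot J\,\bar\vph .
\]
Inserting \eq{J_deriv}, $\dot J=J\,(\dv_\bx\bu)|_{(t,\bX(t,\by))}$, this becomes
\[
\dot w=-\alpha J^{-\alpha}\bar\vph\,(\dv_\bx\bu)|_{(t,\bX(t,\by))}=-\alpha\,(\vph\dv_\bx\bu)|_{(t,\bX(t,\by))}.
\]
The last equality uses \eq{phi} once more.

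Third, compare the two expressions for $\dot w$. They give \eq{pde} at every point of the form $\bx=\bX(t,\by)$. Because the motion is assumed globally invertible at each time, with inverse $\bY$ as in \eq{inverse}, every $(t,\bx)$ arises this way with $\by=\bY(t,\bx)$. Hence \eq{pde} holds everywhere.

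The only point needing care is that $J^{-\alpha}$ must be well-defined and differentiable. This requires $J>0$, or at least $J\neq0$ with a suitable interpretation of the power. The $C^1$-invertibility assumption gives $J\neq0$. Continuity in $t$, together with a reference configuration having positive orientation, gives $J>0$; I would state this as part of the smoothness hypotheses. Beyond that, the argument is a routine chain-rule computation.
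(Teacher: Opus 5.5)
Your proof is correct and is essentially the paper's own argument: you differentiate the defining relation \eq{phi} in $t$ along a particle path, identify the left side as $\frac{D\vph}{Dt}$ via the chain rule and \eq{u1}, and apply \eq{J_deriv} on the right. Your two extra remarks are technical points the paper leaves implicit: global invertibility makes the identity hold at every $(t,\bx)$, and $J>0$ is needed for $J^{-\alpha}$ to make sense.
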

%%%%%%%%%%%%%%%%%%%%%%%%%%%%%%%%%%%%%%%%%
\begin{proof}
	Differentiating \eq{phi} with respect to $t$ gives
	\[\vph_t\big|_{(t,\bX(t,\by))}+\nabla_\bx\vph\big|_{(t,\bX(t,\by))}\cdot \dot\bX(t,\by)
	=-\alpha J(t,\by)^{-\alpha-1}J_t(t,\by)\bar\vph(\by),\]
	or, according to \eq{u1} and \eq{J_deriv},
	\[\vph_t\big|_{(t,\bX(t,\by))}+(\bu\cdot\nabla_\bx\vph)\big|_{(t,\bX(t,\by))}
	=-\alpha J(t,\by)^{-\alpha}\bar\vph(\by)(\dv_\bx\bu)\big|_{(t,\bX(t,\by))},\]
	or, according to the definition of $\vph$,
	\[\vph_t\big|_{(t,\bX(t,\by))}+(\bu\cdot\nabla_\bx\vph)\big|_{(t,\bX(t,\by))}
	+\alpha(\vph\dv_\bx\bu)\big|_{(t,\bX(t,\by))}=0,\]
	which is \eq{pde} at the point $(t,\bX(t,\by))$.
\end{proof}

%%%%%%%%%%%%%%%%%%%%%%%%%%%%%%%%%%%%
%%%%%%%%%%%%%%%%%%%%%%%%%%%%%%%%%%%%
\section{Euler flows and the equation of motion}\label{eul_flws}
%%%%%%%%%%%%%%%%%%%%%%%%%%%%%%%%%%%%
%%%%%%%%%%%%%%%%%%%%%%%%%%%%%%%%%%%%
We now consider the case where the motion $\bX(t,\by)$ records the 
particle trajectories in a gas flow described by the Euler system. The gas
is assumed to ideal and polytropic. In this situation Lemma \ref{scalar_pde} 
shows that, given the motion $\bX(t,\by)$, we can immediately ``solve'' the equations 
for density, specific internal energy, pressure, and specific entropy. This is reasonable: 
knowing how each particle moves, it should be possible to determine 
the thermodynamic fields in terms of their reference profiles.

More precisely, with $\bX(t,\by)$, $\bu(t,\bx)$, and $J(t,\by)$ as above, and 
for given reference profiles $\bar\rho(\by)$, $\bar \eps(\by)$, 
$\bar p(\by)$, $\bar S(\by)$, Lemma \ref{scalar_pde} shows that 
the fields $\rho(t,\bx)$,  $\eps(t,\bx)$, $p(t,\bx)$,  $S(t,\bx)$ defined by
\begin{align}
	\rho(t,\bX(t,\by))&:=J(t,\by)^{-1}\bar\rho(\by)&\qquad\text{($\alpha=1$)}\label{rho_formula}\\
	\eps(t,\bX(t,\by))&:=J(t,\by)^{1-\gamma}\bar\eps(\by)&\qquad\text{($\alpha=\gamma-1$)}
	\label{eps_formula}\\
	p(t,\bX(t,\by))&:=J(t,\by)^{-\gamma}\bar p(\by)&\qquad\text{($\alpha=\gamma$)}
	\label{p_formula}\\
	S(t,\bX(t,\by))&:=\bar S(\by)&\qquad\text{($\alpha=0$)}\label{S_formula}
\end{align}
solve \eq{mass}, \eq{int_energy_ideal}, \eq{p_eqn_ideal}, \eq{S_cons}, respectively.
(For density and specific entropy this holds for any gas, ideal or not.)

Of course, this ``solving'' is illusory: The motion $\bX(t,\by)$, and hence $J(t,\by)$, 
are not known and must be determined from the momentum equation \eq{mom}, 
which in turn involves the density and pressure fields.
Rather, the point is that by working in the Lagrangian setting, 
Lemma \ref{scalar_pde} allows us to write down a single, closed equation for 
the motion. 

Specifically, applying \eq{conv_lagr} with $\bv=\bu=\dot \bX$, \eq{mom} 
gives the equation of motion for particle trajectories
\beq\label{eom_lagr}
	\nqquad\nqquad\nqquad\nqquad
	\text{(EOM)}\qquad\qquad\qquad\ddot\bX(t,\by)=-\textstyle\frac{(\nabla_\bx p)^T}{\rho}\big|_{(t,\bX(t,\by))}.
\eeq
For the case of an ideal gas we can use Lemma \ref{scalar_pde} (i.e., \eq{rho_formula}
and \eq{p_formula}) to write out
the right-hand side of \eq{eom_lagr} explicitly in terms of the motion and the reference profiles of 
density and pressure. We thus obtain a single second order
equation for the motion. In general this is a  complicated, nonlinear PDE for $\bX(t,\by)$.
However, under certain conditions, e.g., constraints on the geometry of particle trajectories,
 this PDE  can be analyzed in detail.

%%%%%%%%%%%%%%%%%%%%%%%%%%%%%%
\subsection{Lagrangian EOM with prescribed density and pressure}
\label{rho_and_p}
%%%%%%%%%%%%%%%%%%%%%%%%%%%%%%
In this case we are given reference profiles $\bar\rho(\by)$ and 
$\bar p(\by)$ for the density and pressure. To write out the right-hand 
side of \eq{eom_lagr} we first use  \eq{p_formula} to express the pressure as
\[p(t,\bx)=\big(\textstyle\frac{\bar p(\by)}{J(t,\by)^\gamma}\big)\big|_{\by=\bY(t,\bx)}.\]
Making use of \eq{inverse} and \eq{jacs}, we obtain
\begin{align}
	\nabla_\bx p\big|_{(t,\bX(t,\by))}
	&=\nabla_\bx\big(\textstyle\frac{\bar p(\by)}{J(t,\by)^\gamma}\big|_{\by=\bY(t,\bx)}
	\big)\big|_{\bx=\bX(t,\by)}\nn\\
	&=\big[\nabla_\by\big(\textstyle\frac{\bar p(\by)}{J(t,\by)^\gamma}\big)\big|_{\by
	=\bY(t,\bx)}D_\bx\bY(t,\bx)\big]\big|_{\bx=\bX(t,\by)} \nn\\
	&=\nabla_\by\big(\textstyle\frac{\bar p(\by)}{J(t,\by)^\gamma}\big)
	(D_\by\bX(t,\by))^{-1},
	\label{grad_p_p_given}
\end{align}
Using \eq{grad_p_p_given} together with \eq{rho_formula} in \eq{eom_lagr} 
yields the following EOM for the motion $\bX(t,\by)$:
\beq\label{eom_lagr_rho_p}
	\ddot\bX(t,\by)=-\textstyle\frac{J(t,\by)}{\bar\rho(\by)}(D_\by\bX(t,\by))^{-T}
	\big(\nabla_\by\big(\textstyle\frac{\bar p(\by)}{J(t,\by)^\gamma}\big)\big)^T,
\eeq
where $J(t,\by)$ is given by \eq{J}.
%%%%%%%%%%%%%%%%%%%%%%%%%%%%%%
\begin{remark}\label{Gal_inv}
	Similar calculations yield the EOM in terms of reference profiles for 
	$\rho$ and either $\eps$ or $S$. Also, Galilean invariance is 
	immediate from the form of \eq{eom_lagr_rho_p}: If $\bX(t,\by)$ 
	is a solution of \eq{eom_lagr_rho_p}, then so is $\bX(t,\by)+\bX_0+t\bw$
	for any choice of constant vectors $\bX_0$ and $\bw$.
\end{remark}
%%%%%%%%%%%%%%%%%%%%%%%%%%%%%%

%%%%%%%%%%%%%%%%%%%%%%%%%%%%%%
%%%%%%%%%%%%%%%%%%%%%%%%%%%%%%
\section{Radial motions}\label{radial_motions}
%%%%%%%%%%%%%%%%%%%%%%%%%%%%%%
%%%%%%%%%%%%%%%%%%%%%%%%%%%%%%
In this section we derive the Lagrangian EOM for {\em radial motions}, i.e., motions in 
which particles move along straight, radial lines in $\RR^n$, the motion along such 
lines depending only on time and the distance to the origin. 

It is assumed that the reference density and pressure profiles are radially 
symmetric functions of the Lagrangian coordinate $\by$, and 
with a slight abuse of notation we write
\[\bar p(\by)=\bar p(s)\qquad\bar\rho(\by)=\bar\rho(s)\qquad s=|\by|.\]
Note that with this setup, all particles $\by$ with the same value of $|\by|$ are 
labeled by the same number $s$.
We stress that we are free to choose the Lagrangian coordinate $\by$ (or $s$),
as well as the reference profiles $\bar\rho(s)$ and $\bar p(s)$.
This freedom will be used below to demonstrate that 
two approaches to building concrete solutions (viz.\ separated radial 
motions and affine radial motions) generate the same class of flows
in physical space.

Letting $R(t,s)$ denote the radial position in physical space
at time $t$ of the particle labeled by $s$, we posit
\beq\label{rad_motn}
	\bX(t,\by)=R(t,s)\textstyle\frac{\by}{s}\qquad\qquad (s=|\by|)
\eeq
and the first step is to derive the (radial) EOM for $R(t,s)$. 

For a scalar field $\varphi(\by)$ and vector field $\bF(\by)\in\RR^{n\times 1}$, we have
(recalling the convention that gradients are treated as row vectors)
\[D_\by(\varphi(\by)\bF(\by))=\bF(\by)\nabla_\by\varphi(\by)+\varphi(\by)D_\by F(\by).\]
Also, for a radial scalar field $\varphi(\by)=\varphi(s)$ we have
\[\nabla_\by\varphi(\by)=\textstyle\frac{\varphi'(s)}{s}\by^T.\]
Applying these relations to \eq{rad_motn} we obtain
\begin{align}
	D_\by\bX(t,\by)&=\textstyle\frac{R}{s}
	\left(I+\frac{1}{R}\big(\textstyle\frac{R}{s}\big)_{\!s}\by\by^T\right),
\end{align}
where $I$ denotes the $n\times n$ identity matrix.
Next, making use of the formulae 
\[\det(I+\ba\bb^T)=1+\ba\cdot\bb\qquad(I+\ba\bb^T)^{-1}=I-\textstyle\frac{1}{1+\ba\cdot\bb}\ba\bb^T,\]
for column vectors $\ba$ and $\bb$ (with $\ba\cdot\bb\neq-1$), we obtain
\beq\label{jac}
	J(t,s)\equiv J(t,\by)=\det(D_\by\bX(t,\by))
	=\big(\textstyle\frac{R}{s}\big)^{n}
	\det\left(I+\frac{1}{R}\big(\textstyle\frac{R}{s}\big)_s\by\by^T\right)
	=\big(\textstyle\frac{R}{s}\big)^{n-1}R_s,
\eeq
and (by symmetry of $D_\by\bX(t,\by)$)
\[\left(D_\by\bX(t,\by)\right)^{-T}=\textstyle\frac{s}{R}
\left(I+\frac{1}{R}\big(\textstyle\frac{R}{s}\big)_{\!s}\by\by^T\right)^{-1}
=\textstyle\frac{s}{R}\left(I-\frac{1}{sR_s}\big(\textstyle\frac{R}{s}\big)_{\!s}\by\by^T\right).\]
Using all this in \eq{eom_lagr_rho_p} yields the following radial EOM for $R=R(t,s)$:
\beq\label{eom_rad}
	\ddot R=-\textstyle\frac{1}{\bar\rho(s)}\big(\textstyle\frac{R}{s}\big)^{n-1}
	\left(\textstyle\frac{s^{(n-1)\gamma}\bar p(s)}{(R_sR^{n-1})^\gamma}\right)_{\!s}.
\eeq
%Alternatively, defining the weighted density and pressure profiles
%\[\hat\rho(x):=s^{n-1}\bar\rho(s)\qquad\qquad \hat p(s):=n^\gamma s^{(n-1)\gamma}\bar p(s),\]
%the radial EOM \eq{eom_rad} takes the more compact form
%\beq\label{eom_rad_2}
%	\ddot R=-\textstyle\frac{R^{n-1}}{\hat\rho(s)}
%	\left(\textstyle\frac{\hat p(s)}{[(R^n)_s]^\gamma}\right)_{\!s}.
%\eeq
It is routine to verify that any $C^2$ solution $R(t,s)$ of the radial EOM \eq{eom_rad} generates, via
\eq{rad_motn}, \eq{u1}, \eq{rho_formula}, and \eq{p_formula}, a radial $C^1$ Euler flow.
%Alternatively, one may  use the unknown 
%\[\mathcal R(t,s):= \textstyle\frac{R(t,s)}{s},\]
%in terms of which the motion is
%\[\bX(t,\by)=\mathcal R(t,s)\by,\]
%giving the EOM
%\beq\label{eom_rad_altn}
%	\ddot{\mathcal R}=-\textstyle\frac{\mathcal R^{n-1}}{s\bar\rho(s)}
%	\left(\textstyle\frac{\bar p(s)}{\big[(s\mathcal R)_s\mathcal R^{n-1}\big]^\gamma}\right)_{\!s}.
%\eeq

%%%%%%%%%%%%%%%%%%%%%%%%%%%%%%
%%%%%%%%%%%%%%%%%%%%%%%%%%%%%%
\section{Separated radial motions and affine radial motions}\label{septd_aff}
%%%%%%%%%%%%%%%%%%%%%%%%%%%%%%
%%%%%%%%%%%%%%%%%%%%%%%%%%%%%%
One case for which the Lagrangian EOM simplifies enough to be 
analyzed in detail is provided by {\em separated} radial motions, i.e., 
radial motions \eq{rad_motn} for which $R(t,s)$ separates:
\beq\label{sep_rad}
	r=R(t,s)=\alpha(t)f(s),
\eeq
where $\alpha(t)$ and $f(s)$ are scalar functions of the same sign (taken as positive in what follows).
The ansatz \eq{sep_rad} is a strong restriction on the motion.
It should not come as a surprise that it is not possible to freely
assign initial data for two independent thermodynamic variables in this setup,
cf.\ \eq{septd} and \eq{bar_p_bar_rho} below.

The use of a separation ansatz in this context goes back at least to McVittie \cite{mcvittie} and 
Keller \cite{kell} (see Remarks \ref{mcvittie_approach} and \ref{keller_approach} below). 
It necessarily leads to flows in which the velocity field 
at each time is linear in the spatial (Eulerian) coordinate:
With $u(t,r)\in\RR$ ($r=|\bx|$) denoting the radial velocity in physical space, 
\eq{sep_rad} gives
\[u(t,R(t,s))=\dot R(t,s)=\dot\alpha(t)f(s)=\textstyle\frac{\dot\alpha(t)}{\alpha(t)}R(t,s),\]
i.e.,
\beq\label{u_rad_aff}
	u(t,r)=\textstyle\frac{\dot\alpha(t)}{\alpha(t)}r.
\eeq
In what follows, separated motions \eq{sep_rad} with $f(s)=s$, i.e.,
\beq\label{rad_aff}
	R(t,s)=\alpha(t)s,
\eeq 
are referred to as {\em affine} radial motions. 
%%%%%%%%%%%%%%%%%%%%%%%%%%%%%%
\begin{remark}\label{mcvittie_approach}
        Radial flows with the velocity field linear in the (Eulerian) spatial coordinate were 
        considered by McVittie \cite{mcvittie}, Sedov \cite{sed}, and Keller \cite{kell}. 
        (For additional references to the Russian literature, see the endnotes 
        to Chapter IV in \cite{sed}.) 
        We note that such flows necessarily give separated 
        motions in the sense above: 
        If $u(t,r)=\beta(t)r$, then \eq{sep_rad} holds with 
        $\alpha(t)=\exp(\int_0^t\beta(\tau)\, d\tau)$ and $f(s)=R(0,s)$. 
\end{remark}
%%%%%%%%%%%%%%%%%%%%%%%%%%%%%%

Affine motions have been 
considered in a number of works, among them \cites{mcvittie,sid_2014,sid_2017,sid_2024,kell}; 
for further references and discussion, see \cites{sid_2024,sjh}.
The more general setting where $\alpha(t)$ is matrix-valued is considered by Sideris
\cite{sid_2017}. We shall restrict attention to radial motions so that $\alpha(t)\in\RR$
in what follows.

%%%%%%%%%%%%%%%%%%%%%%%%%%%%%%
\subsection{Separated vs.\ affine radial flows}\label{septd_vs_aff}
%%%%%%%%%%%%%%%%%%%%%%%%%%%%%%
We next argue that the class of Euler flows 
generated by separated radial motions \eq{sep_rad} coincides with the class generated by
(the seemingly more special) affine radial motions \eq{rad_aff}. This is reasonable 
given the freedom we have to choose reference profiles for the thermodynamic variables. 

To argue analytically for this,
we substitute \eq{sep_rad} into \eq{eom_rad} and separate variables to get that 
there is a separation constant $\lambda\in\RR$ such that (suppressing 
arguments)
\beq\label{septd}
	\ddot\alpha\alpha^{n(\gamma-1)+1}=\lambda=-\textstyle\frac{1}{\bar\rho f}
	\big(\textstyle\frac{f}{s}\big)^{n-1}
	\left(\textstyle\frac{s^{(n-1)\gamma}\bar p}{(f^{n-1}f')^\gamma}\right)_s,
\eeq
where we note that the right-hand equality defines $\bar \rho(s)$ in terms of  $\bar p(s)$
(for given $\lambda$ and $f(s)$). 

The special case with $\lambda=0$ is considered separately in Section \ref{aff_lam_0}.
Assuming for now that $\lambda\neq0$, we obtain the following construction scheme 
for generating radial Euler flows from separated solutions of the radial EOM \eq{eom_rad}:
\begin{enumerate}
	\item Fix $\lambda\neq0$ and solve $\ddot\alpha=\lambda\alpha^{n(1-\gamma)-1}$ with $\alpha(0)>0$.
	(This ODE is analyzed in the following sections.)
	\item Fix positive functions $\bar p(s)$ and $f(s)$, with $f(0)=0$. (The latter condition ensures that the 
	particle located at the center of motion remains there for all times).
	\item Define $\bar\rho$ according to the right-hand equality in \eq{septd}:
	\beq\label{ref_dens}
		\bar\rho(s):=-\textstyle\frac{1}{ \lambda f}
		\big(\textstyle\frac{f}{s}\big)^{n-1}
		\left(\textstyle\frac{s^{(n-1)\gamma}\bar p}{(f^{n-1}f')^\gamma}\right)_s.
	\eeq
	(Of course, to obtain a physically meaningful flow, $\lambda$, $f$, and $\bar p$ 
	must be such that the right-hand side of \eq{ref_dens} is non-negative.)
	With this, $R(t,s):=\alpha(t)f(s)$ solves the radial EOM \eq{eom_rad}.
	\item Letting $r\mapsto S(t,r)$ denote the inverse 
	map of $s\mapsto R(t,s)$, and defining
	\begin{align}
		\rho(t,r)&:=\textstyle\frac{\bar\rho(s)}{J(t,s)}\big|_{s=S(t,r)}\label{constrctd_rho}\\
		u(t,r)&=\textstyle\frac{\dot\alpha(t)}{\alpha(t)}r\label{constrctd_u}\\
		p(t,r)&:=\textstyle\frac{\bar p(s)}{J(t,s)^\gamma}\big|_{s=S(t,r)},\label{constrctd_p}
	\end{align}
	where
	\beq\label{J_sep}
		J(t,s)=\big(\textstyle\frac{R}{s}\big)^{n-1}R_s
		=\alpha(t)^n\big(\textstyle\frac{f(s)}{s}\big)^{n-1}f'(s),
	\eeq
	we obtain a radial  Euler flow for an ideal polytropic gas, 
	i.e., a solution to the system \eq{mass_rad}, \eq{mom_rad}, \eq{p_eqn_ideal_rad}.
\end{enumerate}
For the separated case \eq{sep_rad} under consideration, the standing assumption 
that the radial-position-to-radial-label map $r\mapsto S(t,r)$ is well-defined amounts 
to the assumption that the 
map $s\mapsto f(s)$ is invertible (with the required smoothness), so that
\[S(t,r)=f^{-1}\big(\textstyle\frac{\alpha(t)}{r}\big).\] 
With this assumption in force, we can define the functions $\hat p$ and $\tilde p$ by 
\beq\label{p_hat}
	\hat p(s):=\textstyle\frac{s^{(n-1)\gamma}\bar p(s)}{(f^{n-1}(s)f'(s))^\gamma},
\eeq
and
\beq\label{p_tilde}
	\tilde p(f(s)):=\hat p(s).
\eeq
Using \eq{J_sep} in \eq{constrctd_p} then gives
\beq\label{constrctd_p_2}
	p(t,r)=\textstyle\frac{1}{\alpha(t)^{n\gamma}}\hat p(s)\big|_{s=S(t,r)}
	=\textstyle\frac{1}{\alpha(t)^{n\gamma}}\tilde p(\textstyle\frac{r}{\alpha(t)}).
\eeq
Also, using \eq{p_hat} in \eq{ref_dens} gives
\[\bar\rho(s)= -\textstyle\frac{1}{\lambda f(s)}\big(\textstyle\frac{f(s)}{s}\big)^{n-1}\hat p'(s),\]
which, together with \eq{J_sep}, \eq{constrctd_rho}, and \eq{sep_rad}, gives
\beq\label{constrctd_rho_2}
	\rho(t,r)=-\textstyle\frac{\alpha(t)^{1-n}}{\lambda r}\frac{\hat p'(s)}{f'(s)}\big|_{s=S(t,r)}
	=-\textstyle\frac{\alpha(t)^{1-n}}{\lambda r}\tilde p'(\textstyle\frac{r}{\alpha(t)}).
\eeq
We conclude that any radial Euler flow generated by a  separated solution 
\eq{sep_rad} (with $f(s)$ $C^2$-invertible) of the radial EOM \eq{eom_rad}, can be expressed 
in the following form:
\begin{align}
		\rho(t,r)&:=-\textstyle\frac{\alpha(t)^{1-n}}{\lambda r}\tilde p'(\textstyle\frac{r}{\alpha(t)})\label{constrctd_rho_3}\\
		u(t,r)&=\textstyle\frac{\dot\alpha(t)}{\alpha(t)}r\label{constrctd_u_3}\\
		p(t,r)&:=\textstyle\frac{1}{\alpha(t)^{n\gamma}}\tilde p(\textstyle\frac{r}{\alpha(t)}).\label{constrctd_p_3}
\end{align}
On the other hand, if we start from the affine ansatz \eq{rad_aff}, viz.\ $R(t,s)=\alpha(t)s$, \eq{constrctd_rho}-\eq{J_sep}
give the radial Euler flow
\begin{align}
		\rho(t,r)&:=-\textstyle\frac{\alpha(t)^{1-n}}{\lambda r}\bar p'(\textstyle\frac{r}{\alpha(t)})\label{constrctd_rho_4}\\
		u(t,r)&=\textstyle\frac{\dot\alpha(t)}{\alpha(t)}r\label{constrctd_u_4}\\
		p(t,r)&:=\textstyle\frac{1}{\alpha(t)^{n\gamma}}\bar p(\textstyle\frac{r}{\alpha(t)}),\label{constrctd_p_4}
\end{align}
which is simply \eq{constrctd_rho_3}-\eq{constrctd_p_3} with $\tilde p=\bar p$.
As we are free to choose the reference pressure profile $\bar p(s)$, we conclude 
that the class of Euler flows generated by affine solutions of the radial EOM \eq{eom_rad} 
includes all Euler flows generated by separated solutions of the radial EOM \eq{eom_rad}. 

The analysis above assumed $\lambda\neq0$, but a similar (and simpler) 
analysis shows that the same conclusion holds when $\lambda=0$; we omit the details. We thus have:

%%%%%%%%%%%%%%%%%%%%%%%%%%%%%%
\begin{proposition}\label{septd_vs_affine}
	Any radial Euler flow generated by a separated solution $R(t,s)=\alpha(t)f(s)$ 
	(with $f$ $C^2$-invertible) of the radial EOM
	\eq{eom_rad} can be realized from a radial affine solution $R(t,s)=\alpha(t)s$ by suitably
	choosing the reference pressure profiles $\bar p(s)$ and $\bar\rho(s)$.
\end{proposition}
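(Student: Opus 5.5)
The plan is to show that the physical (Eulerian) fields produced by a separated solution depend only on $\alpha(t)$, $\lambda$, and one profile function. We then check that the affine construction, with a suitable choice of reference pressure, reproduces exactly that profile. The case $\lambda\neq0$ has essentially been carried out in the computation preceding the statement; the proof assembles it and then treats $\lambda=0$ separately.

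For $\lambda\neq0$, start from a separated solution $R=\alpha(t)f(s)$ with $f$ $C^2$-invertible and $f(0)=0$, and reference profiles $\bar p,\bar\rho$ related by \eq{ref_dens}.
\begin{enumerate}
\item Define $\hat p$ by \eq{p_hat} and $\tilde p:=\hat p\circ f^{-1}$ as in \eq{p_tilde}.
\item Using \eq{J_sep}, derive \eq{constrctd_rho_3}--\eq{constrctd_p_3}. The key point is that $J^{-\gamma}\bar p=\alpha^{-n\gamma}\hat p(s)$ and $s=f^{-1}(r/\alpha)$. Likewise, \eq{ref_dens} rewritten through $\hat p'$, combined with $\hat p'(s)/f'(s)=\tilde p'(f(s))$ and $f(s)=r/\alpha$, gives the density formula.
\item Now take the affine motion $R=\alpha(t)s$, with the same $\alpha$ (same $\lambda$, same initial data for the ODE $\ddot\alpha=\lambda\alpha^{n(1-\gamma)-1}$). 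Choose the reference pressure $\bar p^{\rm aff}:=\tilde p$ and the reference density $\bar\rho^{\rm aff}$ given by \eq{ref_dens} with $f(s)=s$, i.e. $\bar\rho^{\rm aff}(s)=-\tilde p'(s)/(\lambda s)$.
\item Observe that with $f(s)=s$ we have $\hat p=\bar p^{\rm aff}$, since $s^{(n-1)\gamma}/(s^{n-1})^\gamma=1$. Hence \eq{constrctd_rho_4}--\eq{constrctd_p_4} coincide with \eq{constrctd_rho_3}--\eq{constrctd_p_3}.
\item The velocities agree automatically, both being $\frac{\dot\alpha}{\alpha}r$. Nonnegativity of $\bar\rho^{\rm aff}$ is inherited: it equals $\alpha^{n-1}r\rho$ at $t=0$ (when $\alpha=1$), up to the obvious rescaling.
\end{enumerate}

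For $\lambda=0$, the separation \eq{septd} gives $\ddot\alpha=0$, so $\alpha$ is affine in $t$. It also gives $(s^{(n-1)\gamma}\bar p/(f^{n-1}f')^\gamma)_s=0$, i.e. $\hat p\equiv$ const, while $\bar\rho$ is unconstrained. The Eulerian fields are then
\begin{align*}
u&=\tfrac{\dot\alpha}{\alpha}r,\\
p&=\alpha^{-n\gamma}\,\hat p \quad(\text{spatially constant}),\\
\rho(t,r)&=\frac{\bar\rho(s)}{J(t,s)}\Big|_{s=f^{-1}(r/\alpha)}.
\end{align*}
Writing $\rho(t,r)=\alpha^{-n}\tilde\rho(r/\alpha)$ with
\[
\tilde\rho(f(s)):=\frac{\bar\rho(s)}{(f(s)/s)^{n-1}f'(s)},
\]
the affine choice $\bar\rho^{\rm aff}:=\tilde\rho$ together with $\bar p^{\rm aff}$ the same constant reproduces the same flow. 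This works because with $f(s)=s$ the affine Jacobian is $J=\alpha^n$.

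The main obstacle is the bookkeeping in the change of label variables. One must verify that $J$ and $\bar p$ combine into a function of $f(s)=r/\alpha$ alone, using the chain rule $\hat p'(s)=\tilde p'(f(s))f'(s)$. One must also check that the invertibility and $C^2$ assumptions on $f$ transfer to the regularity of $\tilde p$ (respectively $\tilde\rho$) needed for the affine construction and the $C^1$ Euler flow.
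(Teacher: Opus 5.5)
Your proposal is correct and follows the paper's argument for $\lambda\neq0$: you introduce $\hat p$ and $\tilde p=\hat p\circ f^{-1}$, derive \eq{constrctd_rho_3}--\eq{constrctd_p_3}, and note that the affine fields \eq{constrctd_rho_4}--\eq{constrctd_p_4} coincide with them once $\bar p=\tilde p$. Your $\lambda=0$ argument (constant $\hat p$, and $\bar\rho^{\mathrm{aff}}:=\tilde\rho$ with $\tilde\rho(f(s))=\bar\rho(s)/((f(s)/s)^{n-1}f'(s))$) correctly supplies the details the paper omits.

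One small cleanup concerns positivity of $\bar\rho^{\mathrm{aff}}$. It follows directly from the identity $\bar\rho^{\mathrm{aff}}(s)=\alpha(t)^n\rho(t,\alpha(t)s)$, valid for every $t$. Your expression ``$\alpha^{n-1}r\rho$ at $t=0$'' is off by a factor of $s$, and it presumes $\alpha(0)=1$, which the separated construction does not impose.
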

%%%%%%%%%%%%%%%%%%%%%%%%%%%%%%

%%%%%%%%%%%%%%%%%%%%%%%%%%%%%%
\begin{remark}\label{keller_approach}
	Keller \cite{kell} applies a slightly different setup: He considers radial motions of an ideal gas
	and opts to formulate the {\em EOM} in terms of the Lagrangian mass coordinate $h$, i.e.,
	\[h=\int_0^{r(t,h)}\rho(t,\xi)\xi^{n-1}\,d\xi,\] 
	so that $r(t,h)$ is proportional to the radius which bounds 
	the mass $h$ at time $t$; cf.\ Section 18 in \cite{cf}.
	Keller then makes the separation ansatz $r(t,h)=j(t)f(h)$ (the $f$ here is 
	different from our $f$ above).
	A somewhat involved analysis of the resulting ODE for the spatial part $f(h)$ 
	again leads to Euler flows of the form \eq{constrctd_rho_4}-\eq{constrctd_p_4}
	(with $j(t)\equiv\alpha(t)$, cf.\ Eqns.\ (10) and (24)-(26) in \cite{kell}). 
	
	Starting instead with the {\em EOM} \eq{eom_lagr_rho_p} and 
	imposing the affine ansatz \eq{rad_aff} simplifies the argument and 
	generates the same class of Euler flows. In particular, with the assumption 
	\eq{rad_aff} there is no equation to be solved for the spatial part
	of the motion. This illustrates the gauge freedom one has in the Lagrangian formulation.
	
	McVittie \cite{mcvittie}  employed an alternative (Eulerian) approach via 
        ``degenerate Einsteinian gravitational potentials'' combined with a separation ansatz, 
        again obtaining radial flows with linear velocity profiles. 
        See \cite{sjh} for a concise summary and extensions of this approach.
\end{remark}
%%%%%%%%%%%%%%%%%%%%%%%%%%%%%%

%%%%%%%%%%%%%%%%%%%%%%%%%%%%%%
%%%%%%%%%%%%%%%%%%%%%%%%%%%%%%
\section{Affine radial motions, cumulative behavior}\label{affine_anal}
%%%%%%%%%%%%%%%%%%%%%%%%%%%%%%
%%%%%%%%%%%%%%%%%%%%%%%%%%%%%%
From now on we focus on radial affine solutions $R(t,s)=\alpha(t)s$ in \eq{rad_aff} 
with the goal of analyzing their behavior and in particular the possibility of accumulation. 
To do so we first fix 
the radial Lagrangian variable $s$ to be the initial (radial) position of a particle,  i.e.,
\beq\label{init_alpha}
	\alpha(0)=1\qquad\Leftrightarrow\qquad R(0,s)=s.
\eeq
According to the separated form \eq{septd} of the EOM we then 
obtain the equations
\beq\label{alpha_eqn}
	\ddot\alpha=\lambda\alpha(t)^{-n(\gamma-1)-1}
\eeq
and
\beq\label{bar_p_bar_rho}
	\bar p'(s)=-\lambda s\bar\rho(s),
\eeq
where $\lambda\in\RR$ is the separation constant.
We note that the initial pressure and density profiles $\bar p$, $\bar\rho$ 
cannot be freely assigned if $\lambda\neq0$, while $\bar p(s)$ is constant 
and $\bar\rho(s)$ is freely assignable (subject to positivity) if $\lambda=0$. 

Before analyzing radial affine motions with vanishing and non-vanishing 
separation constant $\lambda$ we make some preliminary observations.

First, as detailed in Section \ref{aff_lam_0}, if both $\lambda$ and 
$\dot\alpha(0)$ vanish, then we obtain a trivial (i.e., stationary and quiescent) 
flow where each particle remains in its initial position. In all that follows, 
trivial flows are excluded from consideration.

With \eq{alpha_eqn} solved for a given assignment of $\dot\alpha(0)$,
$R(t,s)=\alpha(t)s$ gives the particle path (radial position) of a particle starting 
from $r=s$ at $t=0$. Particle $s$ then moves with radial velocity
$\dot\alpha(t)s$ and radial acceleration $\ddot\alpha(t)s$. In particular, 
the initial radial velocity and acceleration of particle $s$ are
$\dot\alpha(0)s$ and (according to \eq{init_alpha}${}_1$ and \eq{alpha_eqn}) 
$\ddot\alpha(0)s=\lambda s$, respectively. For later reference we record:
%%%%%%%%%%%%%%%%%%%%%%%%%%%%%%%%%%%
\begin{observation}\label{obs1}
	Consider a (non-trivial) radial affine motion $R(t,s)=\alpha(t)s$ with separation constant 
	$\lambda$ and which is defined on all of space (i.e., for all $s\geq0$). 
	\begin{itemize}
	\item[(i)] If $\dot\alpha(0)\neq0$ then it generates an Euler flow with unbounded 
	initial velocity in the far-field $s\to\infty$. 
	\item[(ii)] If $\lambda\neq0$ then it generates an Euler flow with unbounded 
	initial acceleration in the far-field $s\to\infty$. 
\end{itemize}
\end{observation}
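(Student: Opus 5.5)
The plan is to read both claims directly off the kinematics of the affine motion at $t=0$, using the normalization \eq{init_alpha} and the ODE \eq{alpha_eqn}. First I will record that, by \eq{u1} and the radial ansatz \eq{rad_motn} with $R(t,s)=\alpha(t)s$, the particle labeled by $s$ has radial velocity $\dot R(t,s)=\dot\alpha(t)s$ and radial acceleration $\ddot R(t,s)=\ddot\alpha(t)s$. Since $s$ is the initial radial position (because $\alpha(0)=1$), the initial Eulerian velocity field is $u(0,r)=\dot\alpha(0)\,r$, consistent with \eq{constrctd_u_4}. Likewise the initial acceleration $\frac{Du}{Dt}(0,r)$, given by \eq{conv_lagr}, equals $\ddot\alpha(0)\,r$.

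For (i), if $\dot\alpha(0)\neq0$ then $|u(0,r)|=|\dot\alpha(0)|\,r\to\infty$ as $r\to\infty$. This uses that the motion is defined for all $s\ge0$, so that the flow occupies all of space at $t=0$.

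For (ii), I evaluate \eq{alpha_eqn} at $t=0$ using $\alpha(0)=1$, which gives $\ddot\alpha(0)=\lambda\cdot1^{-n(\gamma-1)-1}=\lambda$. Hence the initial acceleration is $\lambda r$, which is unbounded as $r\to\infty$ whenever $\lambda\neq0$. I would also note that, by the EOM \eq{eom_lagr}, this acceleration equals $-\frac{1}{\rho}p_r$ at $t=0$. Equivalently, by \eq{bar_p_bar_rho}, $-\bar p'(s)/\bar\rho(s)=\lambda s$, so the statement is really about the pressure-gradient force per unit mass and is not an artifact of the Lagrangian bookkeeping.

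There is no real obstacle here. The only point needing care is to interpret ``initial velocity/acceleration'' as the Eulerian fields at $t=0$, and to confirm that the Lagrangian label $s$ coincides with the Eulerian radius $r$ at $t=0$. Both follow from the normalization \eq{init_alpha}.
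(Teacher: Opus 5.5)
Your proposal is correct and follows the paper's own argument: the paper reads off the initial velocity $\dot\alpha(0)s$ and the initial acceleration $\ddot\alpha(0)s=\lambda s$ of particle $s$ directly from $R(t,s)=\alpha(t)s$, using $\alpha(0)=1$ and the ODE $\ddot\alpha=\lambda\alpha^{-n(\gamma-1)-1}$. Your extra remark identifying $\lambda s$ with the pressure-gradient force per unit mass $-\bar p'(s)/\bar\rho(s)$ is a reasonable consistency check, but the argument does not need it.
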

%%%%%%%%%%%%%%%%%%%%%%%%%%%%%%%%%%%
As is evident from the particular separated form $R(t,s)=\alpha(t)s$, one type 
of singularity in affine flows occurs whenever the function $\alpha(t)$ 
vanishes at some time $t_c>0$. This means that all fluid 
particles accumulate at the center of motion $r=0$ at time $t=t_c$, a phenomenon we 
refer to as {\em accumulation} or {\em cumulative} behavior:
%%%%%%%%%%%%%%%%%%%%%%%%%%%%%%%%%%%%
\begin{definition}\label{def_accmln}
	A solution of the Euler system \eq{mass_cons}-\eq{energy_cons} is said to suffer 
	{\em accumulation} if all fluid particles arrive at a common point at a common time.
\end{definition}
%%%%%%%%%%%%%%%%%%%%%%%%%%%%%%%%%%%%
Accumulation implies the generation of a Dirac delta in the density field, and 
it is thus a stronger singularity than the amplitude blowup which occurs in 
imploding self-similar flows where no Dirac delta is generated at time of blowup.
E.g., in Guderley's original imploding shock solution \cites{gud,jls}, the density field takes a 
globally constant value at time of collapse. In Hunter's collapsing cavity flow \cite{hun_60},
as well as in the more recently constructed continuously imploding flows \cites{jt2,jt3,mrrs1}, 
the density field suffers blowup at time of collapse. However, the density field remains an $L^1_{loc}$-function
at all times and no Dirac delta appears. The same applies to the other primary flow 
variables velocity, pressure, temperature, and sound speed.

As observed by Sedov \cite{sed} and Keller \cite{kell}, certain radial affine motions do generate 
flows suffering accumulation. Unsurprisingly however, and as we shall see below,
such solutions necessarily suffer from certain unphysical features.
In Section \ref{disc} we comment on the possibility of constructing 
physically acceptable examples of accumulation by modifying the data
in affine cumulative flows.

We note that for radial affine flows, any accumulation is necessary ``total'' in the sense that 
{\em all} fluid particles arrive at the origin at a common time $t_c$ (where $\alpha(t_c)=0$). 
One can imagine other,
milder types of cumulative behavior. E.g., only a certain part of the total mass could 
accumulate at one point at a single time, or the accumulation could be continuous 
in time with particles reaching a common point at distinct times. We are not aware 
of examples of such milder forms of accumulation for the Euler system in the absence of 
forces other than the internal thermodynamical pressure. (In the work \cite{ghj} on collapsing 
self-gravitating gaseous stars, the authors refer to continuous accumulation as
fragmented, continued, or inhomogeneous collapse.)
Another type of singular behavior is related to Observation \ref{obs1} and concerns
far-field behavior ($r\to\infty$).

Finally, under certain conditions, affine solutions provide examples of
bounded and dynamically changing fluid spheres (or ellipsoids \cite{sid_2017}) 
surrounded by vacuum. This scenario 
has been analyzed in detail in recent works \cites{sid_2014,sid_2017}; it is commented on briefly
in Section \ref{lambda_pos}.

In the next two sections we consider radial affine motions with 
vanishing and non-vanishing separation constant $\lambda$, respectively.
We determine the general functional form of the flow variables $\rho,u,p$,
and analyze particle trajectories and characteristics in the resulting flows.
Particular attention is paid to identifying the cases where accumulation 
occurs and the effects of the pressure gradient and far-field behavior. 
We shall see that accumulation can occurs in different manners (entirely due to 
unbounded initial velocities or due to an adverse pressure gradient).
Concrete examples illustrating the findings are given.

%%%%%%%%%%%%%%%%%%%%%%%%%%%%%%%%%%%
%%%%%%%%%%%%%%%%%%%%%%%%%%%%%%%%%%%
\section{Radial affine flows with separation constant $\lambda=0$}\label{aff_lam_0}
%%%%%%%%%%%%%%%%%%%%%%%%%%%%%%%%%%%
%%%%%%%%%%%%%%%%%%%%%%%%%%%%%%%%%%%
With $\lambda=0$ \eq{init_alpha}-\eq{alpha_eqn} give 
\[\alpha(t)=1+bt,\qquad\text{where $b=\dot\alpha(0)$,}\]
and \eq{bar_p_bar_rho} gives $\bar p\equiv p_0$ (constant). We restrict attention to the 
case where $p_0$ is a strictly positive constant. 
Also, the case $b=0$ corresponds to a stationary and quiescent fluid; it 
is not considered further in what follows.

According to \eq{rho_formula}, \eq{u_rad_aff}, \eq{alpha_eqn}, and \eq{p_formula},
we thus obtain the following family of radial affine Euler flows parametrized by the initial 
density profile $\bar\rho(s)$:
\begin{align}
	\rho(t,r)&=\textstyle\frac{1}{(1+bt)^n}\bar\rho(\frac{r}{1+bt})\label{rad_aff_density_0}\\
	u(t,r)&=\textstyle\frac{br}{1+bt}\qquad\qquad\qquad\qquad\text{($b\neq0$, $p_0>0$)}\label{rad_aff_speed_0}\\
	p(t,r)&=\textstyle\frac{p_0}{(1+bt)^{n\gamma}}.\label{rad_aff_pressure_0}
\end{align}
%It is readily verified that \eq{rad_aff_density_0}-\eq{rad_aff_pressure_0} (with \eq{alpha_eqn} in force),
%satisfy the radial Euler equations \eq{mass_rad}, \eq{mom_rad}, and \eq{p_eqn_ideal_rad}.
Since the pressure is a strictly positive constant at each fixed time, 
this class of solutions describes non-vacuum flows where fluid is present 
in all of space at all times.
% (i.e., balls of fluid surrounded by an exterior vacuum is not an option in this case). 
A minimal restriction on the initial density profile is positivity: $\bar\rho(s)\geq0$ for all $s\geq 0$.

The particle starting from radial position $s$ follows the straight line path 
\[R(t,s)=s(1+bt),\] 
and we obtain
two distinct behaviors depending on the sign of $b$: 
$b>0$ gives an expanding and rarefying flow defined for all times $t\geq0$, while $b<0$ yields a converging 
and accumulating flow in which all fluid particles reach the origin at the critical time $t_c=-\frac{1}{b}$.
The former case is not considered further, and for the latter case we do not attempt to propagate the 
solution beyond $t=t_c$. We note the following features:
\begin{enumerate}
	\item Whenever $b\equiv\dot\alpha(0)\neq0$ the initial data suffers from the unphysical 
	property of unbounded velocities of particles in the far-field (cf.\ Observation \ref{obs1}). 
	\item The cumulative behavior that occurs for $b<0$ is hardly surprising: 
	Mass is sent toward the origin at initial time with unbounded speed  in the far-field, and in 
	such a manner that the pressure remains constant in space at all times $t<t_c$. 
	No pressure gradient develops to push fluid either away from or 
	toward the origin. In particular, accumulation in this case is not ``driven'' by a pressure gradient; 
	rather it is an inertial effect and a consequence of the unbounded initial velocity field.
	(Below we shall see that accumulation can occur in cases 
	where the fluid is initially expanding and where a positive pressure gradient is
	responsible for the cumulative behavior; cf. Case (B2a).) 
	\item If we require the total mass to be finite, then the initial density profile $\bar\rho(s)$ must
	 decay to zero as $s\to\infty$. However, as the initial pressure profile is constant, 
	 this implies that the initial temperature field \ $\bar\theta(s)\propto\frac{p_0}{\bar\rho(s)}$ 
	 is unbounded.
	 \item If instead we assume that $\bar\rho(s)$ remains bounded away from vacuum, 
	 then the total mass is infinite.
%	 \footnote{Infinite total mass also occurs in certain radially 
%	 converging self-similar solutions that display amplitude blowup (but not accumulation). 
%	 Density, pressure, temperature, and velocity all decay to zero in the far-field for these 
%	 self-similar solutions.} 
\end{enumerate}
%In our view, property (1) renders solutions of the form \eq{rad_aff_density_0}-\eq{rad_aff_pressure_0} 
%(with $b\neq0$) unphysical.
We next consider the characteristics in the flow. This will be used in Section \ref{disc}
to discuss the possibility of generating physically acceptable examples of accumulation. 
We focus on 1-characteristics $r=r(t)$, satisfying $\dot r=u-c$, in the cumulative case $b<0$.
(Here $c\geq0$ is the local sound speed, cf.\ \eq{sound_speed}.)

Before proceeding with the details we note that, by definition, 1-characteristics propagate slower (i.e., they 
propagate faster
toward the center of motion) than do particle trajectories (which satisfy $\dot r=u$). It is 
therefore reasonable to expect that 1-characteristics, in the cumulative case under consideration,
will reach the center of motion $r=0$ strictly before accumulation occurs at time $t_c=-\frac{1}{b}$.
As the following calculations and examples show, this may or may not be the case:
In certain cases, not only do all particle trajectories, but also all 1-characteristics 
accumulate at $r=0$ at the critical time $t=t_c$; see Example \ref{ex_lam_0} and Figures \ref{paths_1}-\ref{paths_2}. (The same holds 
in certain cases with $\lambda\neq0$; see Example \ref{ex_lam_neg}.)

According to \eq{rad_aff_density_0}-\eq{rad_aff_pressure_0}, a 1-characteristic
$r=r(t)$ satisfies
\beq\label{1_chars}
	\dot r=u(t,r)-c(t,r)=\textstyle\frac{br}{1+bt}
	-\sqrt{\gamma p_0}\cdot\textstyle
	\frac{(1+bt)^{-\frac{n(\gamma-1)}{2}}}{\sqrt{\bar\rho(\frac{r}{1+bt})}}.
\eeq
Introducing the auxiliary variable
\[F(t):=\textstyle\frac{r(t)}{1+bt},\] 
we obtain the ODE
\[\sqrt{\bar\rho(F)}\dot F=-\sqrt{\gamma p_0}\cdot(1+bt)^{-\frac{n(\gamma-1)}{2}-1},\]
or, integrating from initial time to time $t$ (using $F(0)=r(0)$),
\beq\label{1-char_lam_0}
	\int_{F(t)}^{r(0)}\sqrt{\bar\rho(\xi)}\,d\xi=m
	\left[(1+bt)^{-\frac{n(\gamma-1)}{2}}-1\right]
	\qquad \qquad \big(m=\textstyle\frac{2\sqrt{\gamma p_0}}{|b|n(\gamma-1)}\big).
\eeq
As the following example shows, different choices for the initial density 
profile $\bar\rho$ can yield qualitatively different types of behaviors of 
1-characteristics near the center of motion as the critical time $t_c=-\frac{1}{b}$
is approached.
%%%%%%%%%%%%%%%%%%%%%%%%%%%%%%%%%%%
\begin{example}\label{ex_lam_0}
	Consider the initial density profile
	\[\bar\rho(s)=s^k,\qquad k\in\RR,\, k\neq-2,\]
	which yields the 1-characteristics
	\[r(t)=r(0)(1+bt)\cdot
	\left\{1+r(0)^{-\frac{k+2}{2}}\textstyle\frac{k+2}{2}m
	\left(1-(1+bt)^{-\frac{n(\gamma-1)}{2}}\right)\right\}^\frac{2}{k+2}.\]
	With $k=0$, i.e., constant initial density, we obtain 1-characteristics of the form 
	\beq\label{1_char_const_init_dens}
		r(t)=m(1+bt)\cdot\left\{\left(1+\textstyle\frac{r(0)}{m}\right)-(1+bt)^{-\frac{n(\gamma-1)}{2}}\right\}.
	\eeq
	As $t$ increases from initial time $t=0$ the expression within the curly brackets tends to zero
	at some time strictly prior to $t_c=-\frac 1 b$. 
	Hence, each 1-characteristic reaches the center of motion $r=0$ 
	strictly before the critical accumulation time $t_c$; in particular, accumulation of 1-characteristics 
	does not occur. (It is true, though, that the time of arrival at $r=0$ of the 1-characteristic 
	starting from $r(0)$, increases toward $t_c$ as $r(0)\to+\infty$.)

	In the particular case when $n(\gamma-1)=2$ (e.g.,
	$n=3$, $\gamma=\frac 5 3$) the 1-characteristics \eq{1_char_const_init_dens} are straight lines
	\[r(t)=m\cdot\left\{\left(1+\textstyle\frac{r(0)}{m}\right)(1+bt)-1\right\}
	\qquad\qquad (m=\textstyle\frac{\sqrt{\gamma p_0}}{|b|});\]
	see Figure \ref{paths_1}.
	%%%%%%%%%%%%%%%%%%%
	%	FIGURE 
	%%%%%%%%%%%%%%%%%%%
	\begin{figure}
		\centering
		\includegraphics[width=9cm,height=7cm]{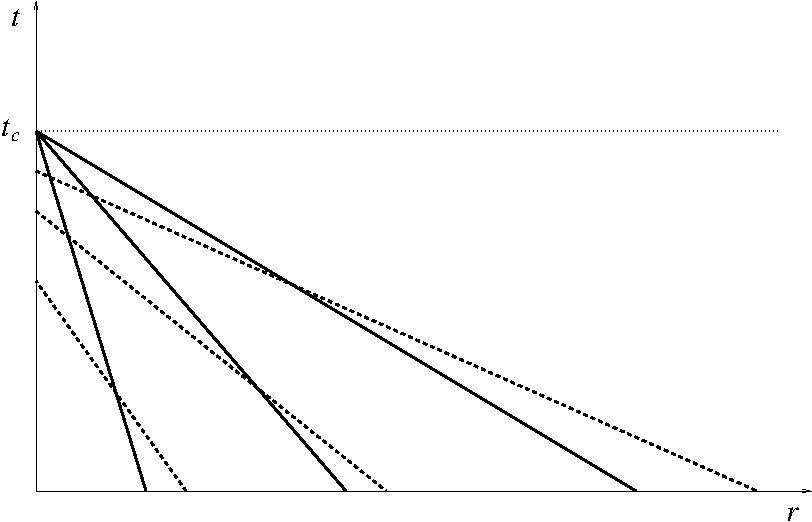}
		\caption{Schematic picture of particle trajectories (solid) and 1-characteristics (dashed) in 
		radial affine flow with separation constant $\lambda=0$. The initial density field  
		is constant and the relation $n(\gamma-1)=2$ is assumed (so that also the 
		1-characteristics are straight lines); cf. Example \ref{ex_lam_0}.}\label{paths_1}
	\end{figure} 
%%%%%%%%%%%%%%%%%%%%%%%%%%%%%%%%%%%%%%%%%%%%

	A different type of behavior results from the unbounded initial density profile 
	\[\bar\rho(s)=s^{-2}.\]
	We consider the 3-dimensional case $n=3$ so that $\bar\rho\in L^1_{loc}(\RR^+,s^2ds)$,
	i.e., initial density is locally integrable in physical space in this case.
	From \eq{1-char_lam_0} we get that the 1-characteristics $r(t)$ in this case are given by 
	\[r(t)=r(0)(1+bt)\exp\left[1-(1+bt)^{-\frac{3(\gamma-1)}{2}}\right],\]
	see Figure \ref{paths_2}. This shows that all 1-characteristics approach $r=0$ with vanishing 
	speed as $t\uparrow t_c=-\frac 1 b$. Thus, in this case, all 1-characteristics as well as
	all particle trajectories accumulate at the origin at the same time. (A similar situation 
	occurs in certain affine flows with separation constant $\lambda<0$; see Example 
	\ref{ex_lam_neg}).
	%%%%%%%%%%%%%%%%%%%
	%	FIGURE 
	%%%%%%%%%%%%%%%%%%%
	\begin{figure}
		\centering
		\includegraphics[width=9cm,height=7cm]{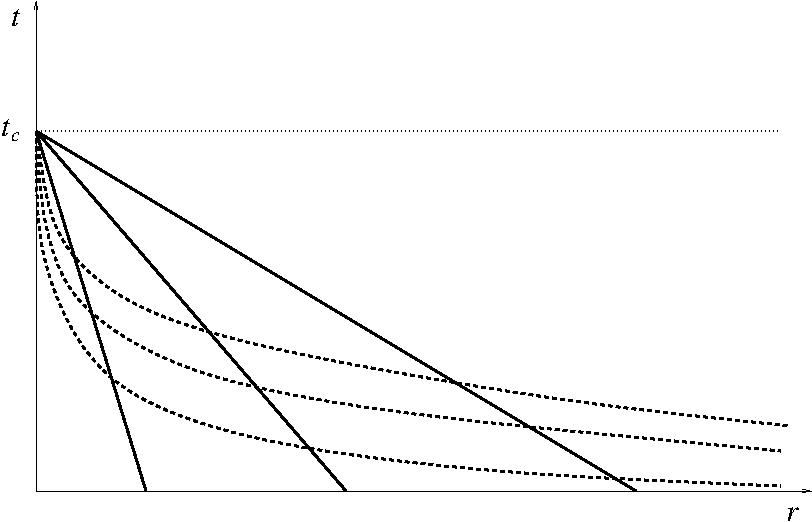}
		\caption{Schematic picture of particle trajectories (solid) and 1-characteristics (dashed) in 
		radial affine flow with separation constant $\lambda=0$. The initial density field  
		is singular ($\bar\rho(s)=s^{-2}$) and $n=3$. In this case both particle trajectories 
		and 1-characteristics accumulate at $r=0$ at time $t_c$; cf. Example \ref{ex_lam_0}.}\label{paths_2}
	\end{figure} 
\end{example}
%%%%%%%%%%%%%%%%%%%%%%%%%%%%%%%%%%%

Summing up, the radial affine solutions corresponding to separation constant $\lambda=0$
given in \eq{rad_aff_density_0}-\eq{rad_aff_pressure_0} suffer accumulation whenever $b<0$,
while the corresponding 1-characteristics may or may not accumulate. The pressure is globally 
constant at each time, and accumulation is entirely due to the fact that the initial velocity 
field is unbounded.

%%%%%%%%%%%%%%%%%%%%%%%%%%%%%%%%%%%
%%%%%%%%%%%%%%%%%%%%%%%%%%%%%%%%%%%
\section{Radial affine flows with separation constant $\lambda\neq 0$}\label{aff_lam_not_0}
%%%%%%%%%%%%%%%%%%%%%%%%%%%%%%%%%%%
%%%%%%%%%%%%%%%%%%%%%%%%%%%%%%%%%%%
For radial affine flows with separation constant $\lambda\neq0$ and reference (initial)
pressure profile $\bar p$, the flow 
variables in the Eulerian frame are given by 
\eq{constrctd_rho_4}-\eq{constrctd_p_4}:
\begin{align}
	\rho(t,r)&=-\textstyle\frac{\alpha(t)^{1-n}}{\lambda r}
	\bar p\,'(\textstyle\frac{r}{\alpha(t)})\label{rad_aff_density}\\
	u(t,r)&=\textstyle\frac{\dot\alpha(t)}{\alpha(t)}r
	\qquad\qquad\qquad\qquad\qquad\qquad\text{($\lambda\neq0$)}\label{rad_aff_speed}\\
	p(t,r)&=\alpha(t)^{-n\gamma}\bar p(\textstyle\frac{r}{\alpha(t)}),\label{rad_aff_pressure}
\end{align}
where $\alpha(t)$ solves \eq{alpha_eqn}. Note that only non-negative functions 
$\alpha(t)$ are relevant (since both $s$ and $R(t,s)=\alpha(t)s$ are non-negative 
radial positions).

In contrast to the case with vanishing separation constant (Section \ref{aff_lam_0}), 
the set of solutions is now parametrized by the initial pressure profile $\bar p(s)$. 
It follows from \eq{rad_aff_density} that we must choose $\lambda$ and $\bar p(s)$ so that
\beq\label{p_rho_reln}
	\sgn(\bar p\,')=-\sgn(\lambda).
\eeq
In particular, pressure gradients are necessarily present when $\lambda\neq0$.
It follows from \eq{rad_aff_density} and \eq{rad_aff_pressure} (with $\alpha(t)$ positive) 
that the density field then remains positive and the pressure gradient never changes sign.

For later reference we record the ODE for 1- and 3-characteristics 
$r=r(t)$ in flows described by \eq{rad_aff_density}-\eq{rad_aff_pressure}. 
We have
\[\dot r=u\pm c=u\pm\sqrt{\textstyle\frac{\gamma p}{\rho}}
	=\dot\alpha\textstyle\frac{r}{\alpha}\pm\sqrt{\gamma|\lambda|}
	\alpha^{-\frac{n(\gamma-1)}{2}}
	\sqrt{\textstyle\frac{r\bar p(\frac{r}{\alpha})}{\alpha\bar |p'(\frac{r}{\alpha})|}},\]
or, in terms of $F(t)=\frac{r(t)}{\alpha(t)}$,
\beq\label{1_3_chars_f}
	\dot F\sqrt{\textstyle\frac{|\bar p'(F)|}{f\bar p(F)}}
	=\pm\sqrt{\gamma|\lambda|}\alpha^{-\frac{n(\gamma-1)}{2}-1}.
\eeq
%Also, for an ideal polytropic gas we have that the temperature field $\theta$ satisfies:
%\[\theta(t,r)\propto\textstyle\frac{p}{\rho}
%=-\frac{\lambda}{\alpha(t)^{n(\gamma-1)}}\frac{s\bar p(s)}{\bar p'(s)}\big|_{s=\frac{r}{\alpha(t)}}.\]
%In particular, the initial temperature field satisfies
%\beq\label{init_temp}
%	\theta(0,r)=\bar\theta(r)\propto -\lambda\frac{r\bar p(r)}{\bar p'(r)}.
%\eeq
%It is reasonable to require that the latter is a bounded function. 
Next, with $\lambda\neq0$ and $\alpha(t)$ positive, it is convenient to consider 
the ODE \eq{alpha_eqn} for $\alpha(t)$, viz.\ $\ddot\alpha=\lambda\alpha(t)^{-n(\gamma-1)-1}$,
separately for $\lambda\gtrless0$:

\medskip

\begin{itemize}
	\item[(A)] $\lambda>0\quad\Rightarrow\quad \dot\alpha(t)$ increases in time;
	\item[]
	\item [(B)] $\lambda<0\quad\Rightarrow\quad \dot\alpha(t)$ decreases in time.
\end{itemize}

\medskip

\noindent
Multiplying \eq{alpha_eqn} by $\dot \alpha$ and integrating once gives
\beq\label{once_intgrtd}
	(\dot\alpha)^2=a-\textstyle\frac{2\lambda}{n(\gamma-1)}\alpha^{-n(\gamma-1)},
\eeq
where $a$ is a constant of integration. Having chosen
the Lagrangian coordinate $s$ to denote initial radial position (so that $\alpha(0)=1$),
\eq{once_intgrtd} shows that we must require
\beq\label{a_constr}
	a\geq\textstyle\frac{2\lambda}{n(\gamma-1)},
\eeq
which is assumed in all that follows. The particular $a$-value
\beq\label{a_0}
	a_0:=\textstyle\frac{2\lambda}{n(\gamma-1)}
\eeq
gives $\dot\alpha(0)=0$, i.e., the fluid is initially at rest.

Depending on the signs of $a$ and $\lambda$ we obtain 
different types of flow behavior. Note that we can obtain $\alpha(t)$ implicitly
 by integrating \eq{once_intgrtd}; in certain cases $\alpha(t)$ can be determined 
 explicitly (e.g., when $n(\gamma-1)=1,\, 2$).

%%%%%%%%%%%%%%%%%%%%%%%%%%%%%%
\subsection{Case (A): $\lambda>0$}\label{lambda_pos}
%%%%%%%%%%%%%%%%%%%%%%%%%%%%%%
We first observe that \eq{once_intgrtd}, with $\lambda>0$, implies that 
no solution exhibits accumulation in this case: $\alpha(t)$ is 
uniformly bounded away from zero along every solution of \eq{once_intgrtd}
(see Figure \ref{Figure_1}). 
Furthermore, for each choice of integration 
constant $a\geq a_0$, the solution curve in the $(\alpha,\dot\alpha)$-plane 
has the horizontal asymptotes $\dot\alpha=\pm\sqrt{a}$ as $\alpha\to\infty$.

Having chosen an $a$ satisfying \eq{a_constr} (in particular, $a>0$), 
let $\alpha_*=\alpha_*(a)$ be the $\alpha$-value for which the 
right-hand side of \eq{once_intgrtd} vanishes, i.e.,
\beq\label{alpha_star}
	\alpha_*=(\textstyle\frac{2|\lambda|}{n(\gamma-1)|a|})^\frac{1}{n(\gamma-1)}.
\eeq
(The absolute values on $\lambda$ and $a$ are included so that the same formula 
applies in Case (B).)
Note that \eq{a_constr} gives $\alpha_*\leq1$, and $\alpha_*=1$ corresponds to 
$a=a_0$.
We then solve \eq{once_intgrtd} for $\dot\alpha$ and integrate from time $t=0$ and use $\alpha(0)=1$ to get
\beq\label{alpha_formula}
	\int_1^{\alpha(t)} \textstyle\frac{\xi^{\frac{n(\gamma-1)}{2}}}
	{\sqrt{\xi^{n(\gamma-1)}-\alpha_*^{n(\gamma-1)}}}\,d\xi
	=\pm\sqrt{a}t
\eeq
%%%%%%%%%%%%%%%%%%%%%%%%%%%%%%
\begin{remark}\label{intgrbl}
	The integrand on the left-hand side of \eq{alpha_formula} is integrable at $\xi=\alpha_*+$.
\end{remark}
%%%%%%%%%%%%%%%%%%%%%%%%%%%%%%
%%%%%%%%%%%%%%%%%%%
%	FIGURE 
%%%%%%%%%%%%%%%%%%%
\begin{figure}
	\centering
	\includegraphics[width=9cm,height=9cm]{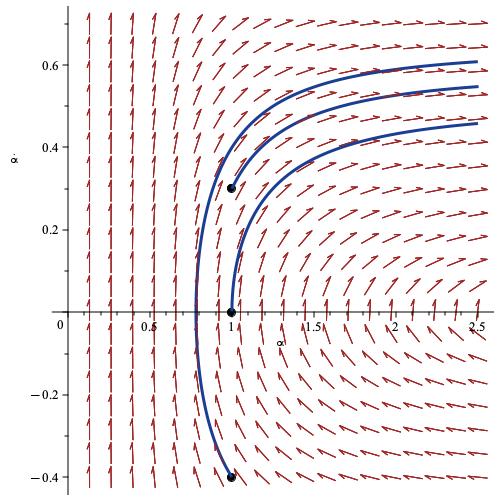}
	\caption{Case (A): Three $(\alpha,\dot\alpha)$-trajectories for \eq{once_intgrtd} with 
	$\lambda>0$. Solid dots indicate initial values (with $\alpha(0)=1$)
	and arrows indicate direction of flow as
	time increases. The parameter values are $n=3$,
	$\gamma=\frac{5}{3}$, $\lambda=\frac{1}{4}$.}\label{Figure_1}
\end{figure} 
%%%%%%%%%%%%%%%%%%%%%%%%%%%%%%%%%%%%%%%%%%%%

Figure \ref{Figure_1} displays three instances of Case (A) for different 
choices of the initial value $\dot\alpha(0)$ (positive, vanishing, and negative, respectively). 
With $\dot\alpha(0)\geq0$ the `$+$' 
sign should be used in \eq{alpha_formula} and the 
whole $(\alpha,\dot\alpha)$-trajectory is located in the 
upper half-plane. In this case $\alpha(t)>\alpha(0)=1$ 
and $\dot\alpha(t)>0$ for all $t>0$. We thus
obtain a radial flow in which all particles accelerate away from the center of motion for 
all times. According to \eq{once_intgrtd}, the particle initially at radius $s$ propagates 
with a speed which approaches the asymptotic value $\sqrt{a}s$.

If $\dot\alpha(0)<0$, then the $(\alpha,\dot\alpha)$-trajectory starts out in the 
lower half of the $(\alpha,\dot\alpha)$-plane, and the flow is a slightly more involved.
Let $t_*=t_*(a)$ be such that $\alpha(t_*)=\alpha_*$ (so that $\dot\alpha(t_*)=0$), 
and note that $t_*$ is finite by Remark \ref{intgrbl}. Then the `$-$' sign in \eq{alpha_formula}
should be used for $t<t_*$, and the `$+$' sign should be used for $t>t_*$. 
We then obtain a flow in which all fluid particles move toward the origin and 
decelerate for $t<t_*$; global stagnation (vanishing velocity) occurs at time 
$t=t_*$, and then all particles accelerate away from the origin for $t>t_*$,
particle $s$ propagating with asymptotic speed $\sqrt{a}s$.

%These conclusions are all contained in Keller's analysis \cites{kell}. 

Next, consider  the thermodynamic fields.
According to \eq{p_rho_reln} (with $\lambda>0$), the initial pressure profile 
$\bar p(s)$ must be a decreasing function, and we can arrange two different situations.
\begin{enumerate}
	\item[(Aa)] If $\bar p(s)$ is defined with $\bar p'(s)<0$ for all $s>0$, then 
	$\bar \rho(s)$ is defined and strictly positive for all $s>0$, i.e., 
	the gas fills all of space. However, \eq{rad_aff_speed} then
	yields unbounded speed of particles in the far-field $r\to\infty$ 
	at all times (except at time of stagnation $t_*$ if this occurs).
	\item[(Ab)] The other possibility is to set up a situation where the gas is initially 
	located within a ball of finite radius $s_0$, and with a vacuum on the outside.
	For this the flow needs to have vanishing pressure along a particle trajectory.
	We therefore prescribe a positive and decreasing pressure profile $\bar p(s)$ on $s\in[0,s_0]$ with 
	$\bar p(s_0)=0$. Next, $\bar p(s)$ determines the initial 
	density profile according to \eq{bar_p_bar_rho}, in particular at $s=s_0$. 
	Different values of $\bar p'(s_0)$ generate different physical properties (e.g., 
	of the entropy field) along
	the vacuum interface $r=\alpha(t)s_0$ (depending on whether $\bar p'(s_0)\leq0$ 
	vanishes or not); we refer to \cites{jt4,sid_2014,rickard_2021,rhj_2021} for further details. 
	Regardless of this, the vacuum interface propagates with acceleration $\ddot\alpha(t)s_0$,
	which is finite and non-vanishing according to \eq{alpha_eqn}. The
	so-called physical boundary condition along the free interface 
	is therefore satisfied \cites{jt4,liu_96,sid_2014,rickard_2021,rhj_2021}. 
\end{enumerate}
We re-iterate the unphysical feature of an unbounded velocity field for any affine 
flow defined in all of space in Case (A) (i.e., (Aa) above).
As the following example shows, this behavior occurs also in cases with 
seemingly reasonable initial data.
%%%%%%%%%%%%%%%%%%%%%%%%%%%%%%%%%%%%%%%%%%%%
\begin{example}\label{instant_blowup}
	With $\lambda>0$, choose $a=a_0$ (cf.\ \eq{a_0}) so that the fluid is initially at rest, 
	and let the initial pressure profile be a Gaussian, $\bar p(s):=p_0e^{-s^2}$ ($p_0>0$ constant).  
	According to \eq{bar_p_bar_rho}, the initial density profile is also a Gaussian, 
	$\bar\rho(s)=\frac{2p_0}{\lambda}e^{-s^2}$, while the initial temperature field is constant,  
	$\bar\theta(s)\equiv\frac{\lambda}{2c_v(\gamma-1)}>0$, (cf.\ \eq{pressure1}-\eq{polytr}).
	However, these pressure and density fields generate an initially unbounded acceleration 
	of the fluid in the far-field (cf.\ Observation \ref{obs1} (ii)), and this 
	results in a velocity field which is unbounded in the far-field at all times $t>0$.
\end{example}
%%%%%%%%%%%%%%%%%%%%%%%%%%%%%%%%%%%%%%%%%%%%

%%%%%%%%%%%%%%%%%%%%%%%%%%%%%%%%%%%%%%%%%%%%
\begin{remark}\label{instant_blowup_rmk}
	The ``instantaneous blowup'' of the velocity field in Example \ref{instant_blowup}
	occurs in particular for 1-dimensional flows. By choosing the constant $p_0$
	small (for a given $\lambda>0$), we can arrange that the conserved quantities 
	(density, linear momentum, total energy) are initially arbitrarily small  
	in $L^1(\RR)$ and $BV(\RR)$. Specifically, for the solution in Example 
	\ref{instant_blowup} with $n=1$, we have from \eq{rad_aff_density}-\eq{rad_aff_pressure} that
	\begin{align*}
		\rho(t,r)&=\textstyle\frac{2p_0}{\lambda \alpha(t)}\exp\big(-\frac{r^2}{\alpha(t)^2}\big)\\
		(\rho u)(t,r)&=\textstyle\frac{2p_0r\dot\alpha(t)}{\lambda\alpha(t)^2}\exp\big(-\frac{r^2}{\alpha(t)^2}\big)\\
		\big(\rho(\eps+\textstyle\frac{u^2}{2})\big)(t,r)
		&=p_0\big[\textstyle\frac{\alpha(t)^{-\gamma}}{\gamma-1}
		+\frac{(\dot\alpha(t))^2r^2}{\lambda\alpha(t)^3}\big]\exp\big(-\frac{r^2}{\alpha(t)^2}\big).
	\end{align*}
	According to \eq{once_intgrtd} we have $|\dot\alpha(t)|\leq \sqrt{a_0}$ and $\alpha(t)\geq 1$ for 
	the solutions under consideration, so that the exponential factors yields finite $L^1$- and $BV$-norms
	of the conserved quantities at all times $t\geq 0$. Having fixed a separation constant $\lambda>0$, it is 
	also clear from the above expressions that these 
	norms can be made arbitrarily small, uniformly in time, by choosing $p_0$  small. 
	This is in agreement with the 1-d existence theory of Glimm \cite{gl} for small variation solutions:
	The conserved quantities remain small in $L^1(\RR)$ and $BV(\RR)$ for all times if sufficiently small 
	initially.
	
	Thus, Example \ref{instant_blowup} 
	highlights the following point about applying Glimm's theorem to concrete 1-dimensional 
	systems of conservation laws: 
	While the conserved quantities remain bounded, a physically relevant quantity 
	may well become unbounded (in our case, the velocity in the far-field).
	
	We note that the initial data in Example \ref{instant_blowup} are uniformly 
	within the regime of strict hyperbolicity, notwithstanding the fact that the data 
	decay to vacuum in the far-field (the characteristic speeds 
	at initial time are $u\equiv0$ and $u\pm c\equiv\pm \sqrt{\gamma\lambda/2}$). 
	Therefore, the instantaneous blowup in this example is not due to an initial loss of strict hyperbolicity 
	(the latter being a known source of possible blowup behavior \cite{sev}).
\end{remark}
%%%%%%%%%%%%%%%%%%%%%%%%%%%%%%%%%%%%%%%%%%%%
Summing up, we have that no accumulation takes place for affine flows in Case (A), but that 
unbounded velocities necessarily occur in the far-field. The lack of accumulation is reasonable 
since \eq{p_rho_reln}, with $\lambda>0$, gives an initial pressure profile which
decreases in the outward radial direction, and \eq{rad_aff_pressure} shows that the same 
holds at all later times as well. Thus, in Case (A), even if the velocity field is initially
converging, the negative pressure gradient ``wins'' and accelerates the fluid away from 
the center of motion at all times.

Also, the unbounded far-field behavior of the velocity is unsurprising: 
According to part (ii) of Observation 1, even when the fluid starts from rest,
the initial acceleration is unbounded in the far-field, and this results
in an unbounded velocity field at all later times.

%%%%%%%%%%%%%%%%%%%%%%%%%%%%%%
\subsection{Case (B): $\lambda<0$}\label{lambda_neg}
%%%%%%%%%%%%%%%%%%%%%%%%%%%%%%
The situation in this case is a little more involved since \eq{a_constr}
with $\lambda<0$ allows for both positive and negative values of 
the constant of integration $a$ in \eq{once_intgrtd}. 
We consider these two sub-cases separately, and we shall
see that accumulation occurs in certain cases.

%%%%%%%%%%%%%%%%%%%%%%%%%%%%%%
\subsubsection{{\em Case (B1): $\lambda<0$ and $a>0$}}\label{lambda_neg_a_pos}
%%%%%%%%%%%%%%%%%%%%%%%%%%%%%%
%%%%%%%%%%%%%%%%%%%
%	FIGURE 
%%%%%%%%%%%%%%%%%%%
\begin{figure}
	\centering
	\includegraphics[width=9cm,height=9cm]{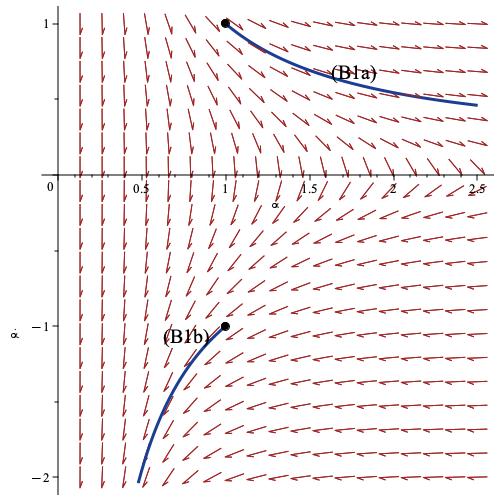}
	\caption{Case (B1): Two $(\alpha,\dot\alpha)$-trajectories for \eq{once_intgrtd} with 
	$\lambda<0<a$. Solid dots indicate initial values (with $\alpha(0)=1$)
	and arrows indicate direction of flow as
	time increases. The parameter values are $n=3$,
	$\gamma=\frac{5}{3}$, $\lambda=-\frac{15}{16}$.}\label{Figure_2}
\end{figure} 
%%%%%%%%%%%%%%%%%%%%%%%%%%%%%%%%%%%%%%%%%%%%

(See Figure \ref{Figure_2}.) In this case \eq{once_intgrtd} gives $(\dot\alpha)^2>a$,
so that all $(\alpha,\dot\alpha)$-trajectories under consideration start out and remain 
either above or below the horizontal strip $\{|\dot\alpha|\leq \sqrt a\}$ in the $(\alpha,\dot\alpha)$-plane. 
In particular, having $\dot\alpha(0)=0$ is not an option in this case: the initial velocity field is necessarily 
everywhere non-vanishing as well as unbounded in the far-field (according to \eq{rad_aff_speed}).

Defining $\alpha_*>0$ as in \eq{alpha_star}, solving \eq{once_intgrtd} for $\dot\alpha$, 
and integrating from time $t=0$ we now get
\beq\label{alpha_formula2}
	\int_1^{\alpha(t)} \textstyle\frac{\xi^{\frac{n(\gamma-1)}{2}}}
	{\sqrt{\xi^{n(\gamma-1)}+\alpha_*^{n(\gamma-1)}}}\,d\xi
	=\pm\sqrt{a}t
\eeq
(Note the difference of sign in the integrand compared to \eq{alpha_formula}.) 
Depending on $\sgn\dot\alpha(0)$ we have two further sub-cases (Figure \ref{Figure_2}):
\begin{itemize}
\item[(B1a)] If $\dot\alpha(0)>\sqrt a$, then the $(\alpha,\dot\alpha)$-trajectory is located in the 
	upper half of the $(\alpha,\dot\alpha)$-plane, and the `$+$' sign in \eq{alpha_formula2} 
	should be used. We have $\alpha(t)>1$ and $\dot\alpha(t)>\sqrt{a}$ for all $t>0$. 
	According to \eq{rad_aff_speed} we 
	obtain a radial flow in which all particles move away from the center of motion for 
	all times, the particle initially at radius $s$ moving with a speed which approaches 
	the asymptotic value $\sqrt{a}s$.
\item[(B1b)] If $\dot\alpha(0)<-\sqrt a$, then the flow has a different character. The 
	$(\alpha,\dot\alpha)$-trajectory is located in the 
	lower half of the $(\alpha,\dot\alpha)$-plane, and the  `$-$' sign in \eq{alpha_formula2} 
	should be used. The trajectory reaches $\alpha=0$ in a finite
	time $t_c=t_c(a)$ which, according to \eq{alpha_formula2}, is given by 
	\[t_c={\textstyle\frac{1}{\sqrt{a}}}\int_0^1 \textstyle\frac{\xi^{\frac{n(\gamma-1)}{2}}}
	{\sqrt{\xi^{n(\gamma-1)}+\alpha_*^{n(\gamma-1)}}}\,dx<\infty.\]
	From \eq{once_intgrtd} we have that $\dot\alpha(t)\downarrow-\infty$ as $t\uparrow t_c$. 
	We therefore obtain a cumulative flow in which all fluid particles accelerate toward 
	the origin and reach it at the common time $t_c$ with infinite speed. 
\end{itemize}
Thus, solutions are cumulative in case (B1b) but not in case (B1a). 
Since $\lambda<0$, we must prescribe an increasing initial 
pressure profile (cf.\ \eq{p_rho_reln}); i.e., the pressure gradient tends
to push fluid toward the center of motion in both cases. However,
in Case (B1a) the velocity field is diverging at all times and cumulative 
behavior is prevented: the pressure gradient ``looses'' and each particle 
moves off to infinity with asymptotically constant speed.
In Case (B1b) the initial velocity field is converging; coupled with the unbounded 
far-field velocity and the sign of the pressure gradient, it is not surprising that 
accumulation occurs in this case.

%%%%%%%%%%%%%%%%%%%%%%%%%%%%%%
\subsubsection{{\em Case (B2): $\lambda<0$ and $\frac{2\lambda}{n(\gamma-1)}\leq a<0$}}\label{lambda_neg_a_neg}
%%%%%%%%%%%%%%%%%%%%%%%%%%%%%%
(See Figure \ref{Figure_3}.)  Define $\alpha_*$ as in
\eq{alpha_star}; solving
\eq{once_intgrtd} for $\dot\alpha$ and integrating from time $t=0$ we now get
\beq\label{alpha_formula3}
	\int_1^{\alpha(t)} \textstyle\frac{\xi^{\frac{n(\gamma-1)}{2}}}{\sqrt{\alpha_*^{n(\gamma-1)}-\xi^{n(\gamma-1)}}}\,d\xi
	=\pm\sqrt{|a|}t
\eeq
%%%%%%%%%%%%%%%%%%%%%%%%%%%%%%
\begin{remark}\label{intgrbl2}
	The integrand on the left-hand side of \eq{alpha_formula3} is integrable at $\xi=\alpha_*-$.
\end{remark}
%%%%%%%%%%%%%%%%%%%%%%%%%%%%%%Depending on $\sgn\dot\alpha(0)$ we have:
\begin{itemize}
\item[(B2a)] If $\dot\alpha(0)>0$, then the $(\alpha,\dot\alpha)$-trajectory starts out in the 
	upper half of the $(\alpha,\dot\alpha)$-plane. Again, let $t_*=t_*(a)$ denote the (finite) time
	at which the trajectory reaches the $\alpha$-value $\alpha_*$ (where $\dot\alpha(t_*)=0$).
	For $t<t_*$  the `$+$' sign in \eq{alpha_formula3} 
	should be used, while the `$-$' sign should be used for $t>t_*$. 
	According to \eq{rad_aff} and \eq{rad_aff_speed} we therefore
	obtain a radial flow in which all particles initially slow down as they move away from the center of motion.
	Global stagnation occurs at time $t=t_*$, and then all particles accelerate toward the origin, reaching 
	it with infinite speed at the accumulation time $t_c$ given by
	\[t_c=t_*+{\textstyle\frac{1}{\sqrt{|a|}}}\int_0^{\alpha_*} \textstyle\frac{\xi^{\frac{n(\gamma-1)}{2}}}
	{\sqrt{\alpha_*^{n(\gamma-1)}-\xi^{n(\gamma-1)}}}\,d\xi,\]
	where $t_c<\infty$ according to Remark \ref{intgrbl2}.
\item[(B2b)] If $\dot\alpha(0)\leq0$, then the flow has the same character as in Case (B2a)  
	for times after $t_*$: Each particle accelerates toward the origin and reaches it at a
	common time $t_c$ given by 
	\[t_c={\textstyle\frac{1}{\sqrt{|a|}}}\int_0^1 \textstyle\frac{\xi^{\frac{n(\gamma-1)}{2}}}
	{\sqrt{\alpha_*^{n(\gamma-1)}-\xi^{n(\gamma-1)}}}\,d\xi.\]
\end{itemize}
Thus, all solutions are cumulative in Case (B2). Since $\lambda<0$
we have $\bar p'>0$, and 
the pressure profile tends to push the fluid toward the center of motion at all times.  
Although the initial velocity field is diverging in Case (B2a), the analysis above shows that 
the pressure gradient ``wins'' in this case, and all fluid particles end up accelerating toward the 
center of motion. 
The increasing pressure profile and the 
unbounded converging velocities in the far-field (subsequent to stagnation in Case (B2a)), 
render accumulation unsurprising in Case (B2).
(We note that, since $\bar p(s)$ is increasing in Case (B), an exterior vacuum is not an option 
as this would require $\bar p(s_0)=0$ at some finite $s_0>0$.)
%%%%%%%%%%%%%%%%%%%
%	FIGURE 
%%%%%%%%%%%%%%%%%%%
\begin{figure}
	\centering
	\includegraphics[width=9cm,height=9cm]{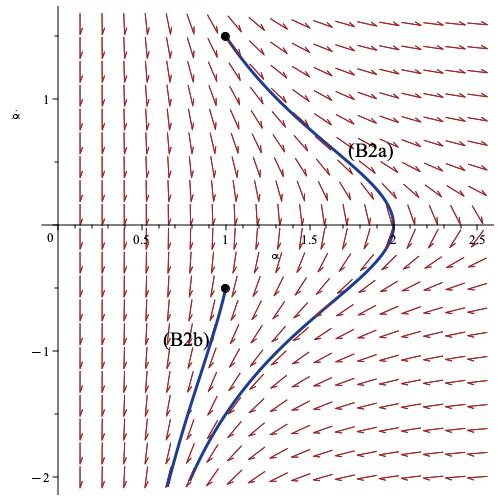}
	\caption{Case (B1): Two $(\alpha,\dot\alpha)$-trajectories for \eq{once_intgrtd} with 
	$\lambda<0$, $a<0$. Solid dots indicate initial values (with $\alpha(0)=1$)
	and arrows indicate direction of flow as
	time increases. The parameter values are $n=3$,
	$\gamma=\frac{5}{3}$, $\lambda=-3$.}\label{Figure_3}
\end{figure} 
%%%%%%%%%%%%%%%%%%%%%%%%%%%%%%%%%%%%%%%%%%%%

%
%
%%%%%%%%%%%%%%%%%%%%%%%%%%%%%%%%%%%%%%%%%
%\begin{remark}
%        For elastic bodies this type of solution {\em is} acceptable and yields 
%        cumulative behavior from physically acceptable initial data; see \cite{sid_2024}.
%\end{remark}
%%%%%%%%%%%%%%%%%%%%%%%%%%%%%%%%%%%%%%%%%
%

%%%%%%%%%%%%%%%%%%%%%%%%%%%%%%%%%%%%%%%%
%\begin{remark}
%        Keller \cite{kell} covers all cases considered above,
%        and also cases with $\gamma=1$ and/or $a=0$ (see also \cite{sed}).
%        He does not phrase his findings in terms of solutions to Cauchy problems but 
%        instead considers solutions as defined for all times $-\infty<t<\infty$. 
%        While discussing particle trajectories and indicating the cases where accumulation occur, 
%        he does not discuss the influence of the pressure gradient or the unbounded far-field behavior
%        of the velocity, the behavior of characteristics, 
%        nor the physicality of cumulative flows.
%\end{remark}
%%%%%%%%%%%%%%%%%%%%%%%%%%%%%%%%%%%%%%%%%

	%%%%%%%%%%%%%%%%%%%
	%	FIGURE 
	%%%%%%%%%%%%%%%%%%%
	\begin{figure}
		\centering
		\includegraphics[width=9cm,height=7cm]{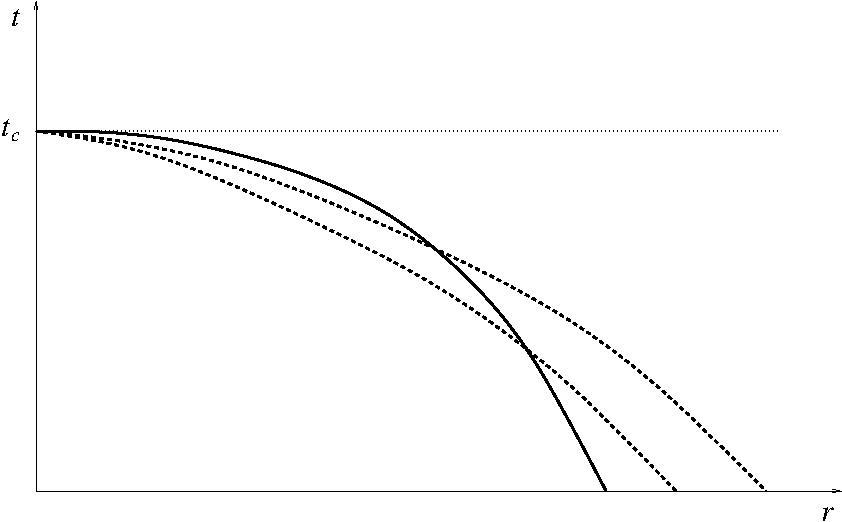}
		\caption{Schematic picture of a particle trajectory (solid) 
		and two 1-characteristics (dashed) in Case (B2) with initial 
		pressure profile $\bar p(s)=s^\kappa$ with $\kappa>0$.
		All particle paths and all 1-characteristics accumulate at the 
		center of motion at time $t_c$. Any particle trajectory is overtaken 
		by every 1-characteristic with a larger starting radius.}\label{Figure_4}
	\end{figure} 
%%%%%%%%%%%%%%%%%%%%%%%%%%%%%%%%%%%%%%%%%%%%

%%%%%%%%%%%%%%%%%%%%%%%%%%%%%%%%%%%
\begin{example}\label{ex_lam_neg}
	To generate a simple example of cumulative behavior in Case (B)
	we consider the situation with integration constant $a=0$ in \eq{once_intgrtd}
	and assume $n(\gamma-1)=2$. (For spatial dimensions 
	$n=1,\, 2,\, 3$, the last assumption amounts to $\gamma=3,\, 2,\, \frac 5 3$, respectively.) 
	With these choices, \eq{once_intgrtd}
	gives the ODE $\dot\alpha=\pm\sqrt{|\lambda|}/\alpha$. Choosing the `$-$' sign, and 
	recalling the assumption $\alpha(0)=1$, we obtain
	\beq\label{alph}
		\alpha(t)=\big(1-2\sqrt{|\lambda|}t\big)^\frac{1}{2}.
	\eeq
	Thus, the trajectory of the particle starting from initial position $s$ is given as
	\[r=R(t,s)=s\big(1-2\sqrt{|\lambda|}t\big)^\frac{1}{2},\]
	and it follows that the resulting flow suffers accumulation at the critical time
	\[t_c=\textstyle\frac{1}{2\sqrt{|\lambda|}}.\]
	We next consider the 1-characteristics $r=r(t)$ in the flow. To obtain an explicit expression 
	we need to prescribe a concrete (increasing) initial pressure profile $\bar p$. Letting
	\beq\label{p_bar_ex}
		\bar p(s)=s^\kappa\qquad\qquad (\kappa>0),
	\eeq
	we obtain from \eq{1_3_chars_f} (with the $-$ sign), together with \eq{alph} and the choice
	$n(\gamma-1)=2$, the following ODE for $F(t)=\frac{r(t)}{\alpha(t)}$:
	\[\sqrt\kappa\textstyle\frac{d}{dt}(\log F)=-\textstyle\frac{\sqrt{\gamma|\lambda|}}{\alpha^2}
	=-\textstyle\frac{\sqrt{\gamma|\lambda|}}{1-2\sqrt{|\lambda|}t}.\]
	Integrating in time and using $F(0)=r(0)$ give the explicit expression 
	for the 1-characteristics $r(t)=\alpha(t)F(t)$:
	\beq\label{1_char_lam_neg}
		r(t)=r(0)\big(1-2\sqrt{|\lambda|}t\big)^\beta \qquad\qquad
		\beta=\textstyle\frac{1}{2}\big(1+\sqrt{\textstyle\frac{\gamma}{\kappa}}\big).
	\eeq
	We therefore obtain an example of a cumulative flow in which all particle trajectories
	(viz.\ $r=\alpha(t)s$), as well as all 1-characteristics (viz.\ $r=\alpha(t)F(t)$),
	accumulate at the origin at the same critical time $t_c=1/2\sqrt{|\lambda|}$. See Figure \ref{Figure_4}. 
	(This provides a less singular example of the same behavior noted in the last part of 
	Example \ref{ex_lam_0}.) It follows from
	\eq{alph} and \eq{1_char_lam_neg} that, regardless of the value of $\kappa>0$ in \eq{p_bar_ex}, 
	any particle trajectory starting from position $s$ is overtaken by all 1-characteristics
	starting from positions $r(0)>s$ on its way toward the origin.
\end{example}
%%%%%%%%%%%%%%%%%%%%%%%%%%%%%%%%%%%

%%%%%%%%%%%%%%%%%%%%%%%%%%%%%%%%%%%
%%%%%%%%%%%%%%%%%%%%%%%%%%%%%%%%%%%
\section{1-dimensional non-affine flows exhibiting accumulation}\label{non_affine}
%%%%%%%%%%%%%%%%%%%%%%%%%%%%%%%%%%%
%%%%%%%%%%%%%%%%%%%%%%%%%%%%%%%%%%%
In this section we seek {\em non-affine} flows subject to the ansatz that the 
Jacobian of the motion $J(t,\by)=\det D_\by\bX(t,\by)$ is a function of
$t$ alone. This property holds for the radial affine motions \eq{rad_aff} 
considered earlier (where $J(t,\by)=\alpha(t)^n$, cf.\ \eq{J_sep} with $f(s)=s$).
We shall see that the $1$-dimensional Euler system admits certain non-affine 
motions of this type. These will provide additional 
examples of cumulative flows. However, they again suffer from unbounded 
velocity in the far-field. Thus, the main point of the following analysis
is to demonstrate that cumulative behavior can occur also in non-affine flows.

We start by considering a radial motion $\bX(t,\by)=R(t,s)\textstyle\frac{\by}{s}$ 
($s=|\by|$) in any dimension $n$. According to \eq{jac} its Jacobian is
\[J(t,s)=\big(\textstyle\frac{R}{s}\big)^{n-1}R_s.\]
The requirement that $J(t,s)\equiv J(t)$ implies that the motion $R(t,s)$
has the functional form
\beq\label{jac_t_motn}
	R(t,s)=[\psi(t)+J(t)s^n]^\frac{1}{n},
\eeq
where the functions $\psi(t)$, $J(t)$ are to be determined from 
the Lagrangian EOM \eq{eom_rad}. Observe that $\psi(t)\equiv 0$ 
yields affine motions; we are therefore only interested in cases
with non-trivial $\psi(t)$. Also, since we label particles by their 
initial (radial) position $s$, we have
\beq\label{ics}
	\psi(0)=0\qquad\text{and}\qquad J(0)=1.
\eeq
Using \eq{jac_t_motn} in the EOM \eq{eom_rad} yields the 
following equation (dropping the $t$-argument):
\beq\label{jac_t_eom}
	[\psi+Js^n]^{\frac{2}{n}-3}
	\left\{[\psi+Js^n][\ddot\psi+\ddot Js^n]
	+ (\textstyle\frac{1}{n}-1)[\dot\psi+\dot Js^n]^2\right\}
	=-nJ^{-\gamma}\textstyle\frac{\bar p'(s)}{s^{n-1}\bar\rho(s)}.
\eeq
We have not been able to identify non-affine solutions to \eq{jac_t_eom} 
(i.e., $\psi(t)\not\equiv 0$) in dimensions $n>1$. Restricting to the 1-dimensional case $n=1$
we obtain the equation
\beq\label{jac_t_eom_1_d}
	\ddot\psi+\ddot Js=-J^{-\gamma}\textstyle\frac{\bar p'(s)}{\bar\rho(s)}.
\eeq
There are two possibilities: either $\ddot\psi\equiv 0$ or $\ddot J\equiv 0$.
However, the former case gives $\psi(t)=kt$ for a constant $k$ (according to \eq{ics}${}_1$), 
while the functions $J(t)$, $\bar p(s)$, $\bar\rho(s)$ satisfy the equations 
\eq{alpha_eqn}-\eq{bar_p_bar_rho} for affine motions in 1-d  (with $J\equiv\alpha$).
The upshot is that the resulting motion $R(t,s)=\psi(t)+J(t)s$ in this case is simply 
a Galilean shift of an affine motion (cf.\ Remark \ref{Gal_inv}).
This, therefore, does not give anything essentially new.

On the other hand, the second possibility gives, according to \eq{ics}${}_2$,
$J(t)=1+bt$, where $b\neq0$ for non-trivial motions. The EOM \eq{jac_t_eom_1_d}
then reduces to 
\[\ddot\psi(t)=-\textstyle\frac{1}{(1+bt)^\gamma}\textstyle\frac{\bar p'(s)}{\bar\rho(s)},\]
so that, for  a separation constant $\lambda\in\RR$, we must have
\beq\label{relsns}
	\ddot\psi(t)=\textstyle\frac{\lambda}{(1+bt)^\gamma}\qquad\text{and}\qquad 
	\textstyle\frac{\bar p'(s)}{\bar\rho(s)}=-\lambda.
\eeq
The second of these equations define the initial density in terms of the initial 
pressure profile, while the first equation gives (recalling \eq{ics}${}_1$)
\beq\label{psi}
	\psi(t)=
	\left\{
	\begin{array}{ll}
		kt-\textstyle\frac{\lambda}{b^2}\log(1+bt) & \gamma=2\\ \\
		kt+\textstyle\frac{\lambda}{(\gamma-1)(\gamma-2)b^2}
		\left[(1+bt)^{2-\gamma}-1\right] & \gamma\neq2,
	\end{array}
	\right.
\eeq
for a constant $k$.
Without loss of generality (due to Galilean invariance, Remark \ref{Gal_inv}) the $kt$-term may be dropped,
and we obtain the motions
\beq\label{non_aff_motns}
	R(t,s)=
	\left\{
	\begin{array}{ll}
		-\textstyle\frac{\lambda}{b^2}\log(1+bt)+(1+bt)s & \gamma=2\\ \\
		\textstyle\frac{\lambda}{(\gamma-1)(\gamma-2)b^2}
		\left[(1+bt)^{2-\gamma}-1\right] +(1+bt)s& \gamma\neq2
	\end{array}
	\right.\qquad(b\neq0).
\eeq
For $\lambda=0$ we obtain affine flows already considered.
On the other hand, for $\lambda\neq0$ we obtain 
non-affine flows in which we are free to assign any monotone reference 
pressure $\bar p(s)\geq0$ subject to the constraint \eq{p_rho_reln}; the reference 
density is then defined by \eq{relsns}${}_2$. We note that the initial 
particle speed $\dot R(0,s)$ is unbounded as $s\to\pm\infty$ for any value of $\gamma>1$.

Assuming $\lambda\neq0$, the non-affine flows \eq{non_aff_motns} behave
differently depending on the sign of $b$. With $b>0$ we get globally defined 
flows in which all particle trajectories tend to $\pm\infty$ (for $s\gtrless0$, respectively) 
as $t\to\infty$. 
However, for $b<0$ we obtain singular behavior in finite time: For $\gamma\geq2$
all particle trajectories ``blow away'' to infinity as $t\uparrow t_c:=-\frac{1}{b}$, while
for $1<\gamma<2$ all particle trajectories accumulate at 
$r=r_c:=\frac{\lambda}{(\gamma-1)(\gamma-2)b^2}$ at time $t=t_c$.
This, then, provides the sought-for examples of 1-dimensional non-affine cumulative 
flows. 

We observe that \eq{non_aff_motns} is readily inverted to determine the inverse
``position-to-label'' map $s=S(t,r)$, so that the flow variables $\rho,p,u$ 
in the Eulerian frame are available explicitly (we omit the detailed expressions).

%%%%%%%%%%%%%%%%%%%%%%%%%%%%%%%%%%%
\section{Summary and Discussion}\label{disc}
%%%%%%%%%%%%%%%%%%%%%%%%%%%%%%%%%%%
%
%Note that accumulation, in the sense above (INCLUDE A DEFINITION!), is a highly singular event: {\em all}
%fluid particles are required to converge on the center of motion at a {\em common}
%critical time $t=t_c$. However, one can imagine a milder form of cumulative behavior, with
%distinct fluid particles arriving at the center of motion at distinct times, continuously building up 
%an increasingly stronger $\delta$-distribution in the density field. We are not aware of any example,
%``physical'' or not, of this type of behavior for the Euler equations.
%
%
%\vspace{1cm}
Above we derived the Lagrangian formulation of the Euler equations 
for compressible flow of an ideal and polytropic gas. We then specialized to
radial and to radial separated solutions, and we observed that the latter 
class is covered by the class of radial affine solutions. 
It was noted that the particular functional form of such flows imply 
that the velocity field is unbounded in the far field, possibly except at a 
single time of stagnation (possibly at initial time).
We then analyzed in some detail radial affine solutions with an emphasis 
on cases with accumulation where all fluid particles accumulate at the 
center of motion at a common time. 

We point out the following features of cumulative solutions in radial affine Euler flows
(in the absence of other forces than the internal thermodynamical pressure): 
\begin{enumerate}
	\item[(i)] There are of two distinct types of (affine) accumulation, driven either by 
	inertia or by pressure gradients. 
	\item[(ii)]  The first type occurs for separation constant $\lambda=0$, and 
	thus a vanishing pressure gradient, and requires a converging initial velocity field
	(cf.\ Section \ref{aff_lam_0}).
	The second type requires $\lambda<0$, and thus a positive radial pressure gradient, 
	and an initial velocity field that is not too strongly diverging (cf.\ Section \ref{aff_lam_not_0}).
	\item[(iii)]  All cases of accumulation are readily accounted for by physical considerations.
	\item[(iv)]  1-characteristics in cumulative flows necessarily reach the center of motion 
	no later than the time of accumulation; however, 1-characteristics may also accumulate 
	at the origin at the same time (cf.\ Examples \ref{ex_lam_0} and \ref{ex_lam_neg}) .
	\item[(v)]  Accumulation in radial affine flows is necessarily total, 
	i.e., {\em all} fluid trajectories $R(t,s)=\alpha(t)s$ approach $r=0$ as $t\uparrow t_c$,
	where $\alpha(t_c)=0$. Each fluid particle then experiences 
	unbounded growth in its pressure as $t\uparrow t_c$, cf.\ \eq{rad_aff_pressure_0} 
	and \eq{rad_aff_pressure}. The latter behavior appears necessary to accomplish 
	accumulation (i.e., preventing fluid particles to move away from the center of motion).
	\item[(vi)]  All cases of accumulation suffer from certain unphysical features:
	Either the initial velocity or the initial acceleration is unbounded in the far field;
	in either case unbounded velocities occur in the far-field at all later times
	(except, possibly, at a single time of stagnation).
	\item[(vii)] It is worth noting that all examples of accumulation analyzed above occur
	already in 1-dimensional flow; in particular, the multi-d effect of wave focusing does
	not play a role in generating these cumulative behaviors.
\end{enumerate}
These points raise the question of whether accumulation can occur without initially unbounded 
velocity or acceleration.
As a first step it is natural consider far-field
modifications (for $r\geq \bar r>0$, say) of the initial data for the cumulative solutions 
studied above. By this we mean changing the data for $r\geq \bar r$,
either continuously or discontinuously at $r=\bar r$, to data 
with bounded velocity and acceleration in the far-field. 
However, we have not been able to build an Euler flow in which 
the modified flow leaves intact the particle trajectories starting from 
positions $r\in[0,\bar r]$. Instead, changes in the data for $r\geq \bar r$ 
introduce perturbations that overtake the particle trajectories 
which should, supposedly, accumulate. 

More precisely, if the modified flow remains continuous,
then the introduced modifications for $r\geq \bar r$ will necessarily influence, via perturbations 
traveling with 1-characteristic speeds, the inner particle trajectories. 
Next, an admissible 1-shock inserted at $r=\bar r$ will overtake 1-characteristics, and thus 
necessarily also particle trajectories, on its way toward $r=0$. Inserting a 3-shock 
$r=\bar r$ will also introduce perturbations propagating with 1-characteristic speeds,
again influencing the inner particle trajectories. (In the latter case, the perturbations 
are propagated along 1-characteristics ``originating'' from the inside of the 3-shock.)  
Apparently, the only way to avoid having the far-field modification of the data 
pollute the inner accumulating flow, is to have the perturbed and unperturbed 
parts of the flow connected by a contact discontinuity (i.e., a particle trajectory). 

However, there is an immediate issue with this last scenario: 
Since the pressure in the accumulating inner part of the flow 
becomes unbounded, and the pressure does not change across a contact, we 
must provide an outer flow in which the innermost fluid particle (propagating along
the contact) undergoes unbounded growth in pressure. However, we are not aware of 
any example of an Euler flow with this property (except, of course, the radial affine
cumulative solutions that we seek to modify). In particular, to the best of our knowledge,
pressure blowup along a particle trajectory does not occur in any of the 
known cases of self-similar Euler flows \cites{gud,hun_60,jt3,laz,mrrs1,jls}.

While we recognize that the preceding arguments do not prove that 
modification of the far-field in known accumulating flows must necessarily 
lead to non-accumulating flows, we are not optimistic about this approach.
Instead, it seems that a different type of mechanism would be required to generate 
truly acceptable cases of cumulative behavior. If such flows exist it is reasonable to 
expect that they would involve wave-focusing in dimensions $n\geq 2$.
In conclusion, it remains an open question if accumulation in Euler flow
of an ideal gas can be attained with initially bounded velocity and acceleration fields.

%BIBLIOGRAPHY

\end{document}